\theoremstyle{plain}
\newtheorem{theorem}{Theorem}[section]
\newtheorem{prop}[theorem]{Proposition}
\newtheorem{corollary}[theorem]{Corollary}
\numberwithin{equation}{section}
\theoremstyle{definition}
\newtheorem{definition}[theorem]{Definition}
\newtheorem{example}[theorem]{Example}
\newtheorem{proposition}[theorem]{Proposition}
\newtheorem{remark}[theorem]{Remark}
\newcommand{\C}{\mathbb{C}}
\newcommand{\CP}{\mathbb{C}P}
\newcommand{\R}{\mathbb{R}}
\newcommand{\Z}{\mathbb{Z}}
\newcommand{\im}{{\rm im}\hspace{1pt}}
\newcommand{\coker}{{\rm coker}\hspace{1pt}}
\def\blfootnote{\xdef\@thefnmark{}\@footnotetext}\makeatother
\numberwithin{equation}{section}
\def\quasitoric{toric }
\def \({\left(}
\def \){\right)}
\def \<{\langle}
\def \>{\rangle}
\def \bar{\overline}
\def \deg{\mathrm{deg}}
\def\nd{\noindent}
\def \bb{\mathbb}
\def \mc{\mathcal}
\def \CC{{\bb{C}}}
\def \CP{{\bb{CP}}}
\def \NN{{\bb{N}}}
\def \RR{{\bb{R}}}
\def \ZZ{{\bb{Z}}}
\def \begineq{\begin{equation}}
\def \endeq{\end{equation}}
\numberwithin{equation}{section}
\begin{document}

\title{On integral cohomology of certain orbifolds}

\author[A. Bahri]{Anthony Bahri}
\address{Department of Mathematics, University of Rider, NJ, United States of America}
\email{bahri@rider.edu}

\author[D. Notbohm]{Dietrich Notbohm}
\address{Department of Mathematics, Vrije Universiteit Amsterdam,
 Amsterdam, The Netherlands}
\email{d.r.a.w.notbohm@vu.nl}

\author[S. Sarkar]{Soumen Sarkar}
\address{Department of Mathematics, Indian Institute of Technology 
Madras, Chennai, India}
\email{soumensarkar20@gmail.com}

\author[J. Song]{Jongbaek Song}
\address{Department of Mathematical Sciences, KAIST, Daejeon, Republic of Korea}
\email{jongbaek.song@gmail.com}


\subjclass[2010]{57R18, 57R19, 14M15}

\date{\today}

\keywords{orbifold, CW-complex, group action, 
homology group, cohomology ring, Grassmannian}

\abstract 
The CW structure of certain spaces, such as effective orbifolds, can be too
complicated for computational purposes.  In this paper we use the concept 
of  $\mathbf{q}$-CW complex structure on an orbifold, to detect torsion
in its integral cohomology. The main result can be applied to well known
classes of orbifolds or algebraic varieties having orbifold singularities,
 such as toric orbifolds, simplicial toric varieties, torus orbifolds and weighted Grassmannians. 
\endabstract

\maketitle

\section{Introduction}
Recent years have seen the development of an abundance 
of specialized tools for the study of orbifolds.  
A variety of cohomology theories  have been crafted 
specifically to investigate particular classes of orbifolds 
arising in complex geometry, algebraic geometry, 
symplectic geometry and string topology. 
Among these are: the {\em de Rham 
cohomology of orbifolds\/} \cite[Chapter 2]{ALR}, 
{\em orbifold Dolbeault cohomology\/} \cite{Bai54, Bai56, Sat56}, 
{\em Bott--Chern orbifold  cohomology\/} \cite{Ang13}, 
{\em orbifold quantum cohomology\/} 
\cite[Section $2.2.5$]{CCLT}, {\em intersection homology\/} 
\cite{GM} and the  {\em Chen--Ruan orbifold cohomology ring\/} 
\cite{CR}. 

These theories do yield information about the ordinary 
singular cohomology of orbifolds but the coefficients 
tend to be real, complex or rational, as appropriate. 
In general, the  singular integral cohomology ring of a 
given orbifold remains sometimes intractable object, 
difficult to compute. 

Among orbifolds for which the integral cohomology ring 
does succumb to the traditional methods of algebraic 
topology, are those having cohomology which can be 
shown to be concentrated in even degree. The prime 
example is that of weighted projective spaces. Additively, 
their integral cohomology agrees with that for ordinary 
projective spaces but the ring structure is complicated by 
an abundance of divisibility arising from the weights, 
\cite{Ka, Amr}, \cite[Section $5.1$]{BSS}. Included also 
in this class of ``nice'' orbifolds  are: weighted Grassmannians 
\cite{CR-wgr, AM}, certain singular toric varieties, toric 
orbifolds\footnote{In the literature, these are also called 
{\em quasi-toric orbifolds.\/}}, and orbifolds 
which arise as complex vector bundles over spaces 
with even cohomology.

With this in mind, the determination of {\em verifiable\/} 
sufficient conditions on an orbifold, which ensure that the  
integral cohomology is concentrated in even degrees, 
becomes a natural question.  Motivated  by Kawasaki's 
computation for weighted projective spaces, the approach 
taken here begins by identifying classes of orbifolds which 
can be constructed by a sequence of canonical cofibrations, 
in a manner not unlike that for CW complexes. We employ 
ideas originated by Goresky in \cite{Gor} and developed in 
a toric context by Poddar and Sarkar in \cite{PS}. 
Confounding the process however, is the ineluctable need 
to replace ordinary cells with ``{\bf q}-cells'', the quotient of 
a disc by the action of a finite group. The task reduces then 
to keeping at bay the intrusion of torsion into odd 
cohomological degrees.

The basic structure of a {\bf q}-CW complex is reviewed and 
developed further in Section \ref{sec_q-CW_complex}. 
We note in Proposition \ref{prop_qCW_simp_complex} that 
a {\bf q}-CW complex is homotopic to a simplicial complex. 
In order to compute 
the integral cohomology of a {\bf q}-CW complex inductively, 
using a specific sequence of cofibrations, we introduce the 
notion of a {\em building sequence\/} $\{(Y_{i},0_{i})\}_{i = 1}^{\ell}$. 
Here, $Y_{i}$ is obtained from $Y_{i-1}$  by attaching a 
{\bf q}-cell of the form $e^{{k_{i}}}/G_{i}$, where $e^{{k_{i}}}$ 
is an open disc and $G_{i}$ is a finite group acting linearly.  
The image of the origin $0_{i}$ in the {\bf q}-cell $e^{{k_{i}}}/G_{i}$ 
plays an important role.  

The results in the following sections  are anchored in the 
observation that when a finite group $G$ acts linearly on a 
closed disc $\overline{D}^{n}$, the torsion which can arise in 
$H_{\ast}(S^{n-1}\!/G; \mathbb{Z})$  depends on $|G|$ only. 
In particular, we conclude that if $p$ is co-prime
to all the primes dividing $|G|$, then 
$H_{\ast}(S^{n-1}\!/G; \mathbb{Z})$ has no $p$-torsion.

Section \ref{sec_hom_q-cw_comp} contains the main theorem 
about {\bf q}-CW complexes.
\begin{theorem}\label{thm_even_cells_no_p-torsion}
Let $X$ be a {\bf q}-CW complex with no odd dimensional 
{\bf q}-cells and $p$ a prime number. If $\{(Y_{i},0_{i})\}_{i = 1}^{\ell}$ 
is a building sequence such that $\gcd \{p, |G_{i}|\} = 1$ for all
$i$ with $e^{2k_{i}} /G_{i} = Y_{i}\setminus Y_{i-1}$, then 
$H_{\ast}(X; \mathbb{Z})$ has no $p$-torsion and 
$H_{\text{odd}}(X; \mathbb{Z}_{p})$ is trivial.
\end{theorem}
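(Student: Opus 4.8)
The plan is to prove this by induction on the length $\ell$ of the building sequence, computing the integral homology of each $Y_i$ as we attach q-cells one at a time. Since $X$ has no odd-dimensional q-cells, every attached q-cell has the form $e^{2k_i}/G_i$, and the key invariant to track inductively is the hypothesis that $H_\ast(Y_{i-1};\ZZ)$ has no $p$-torsion and is concentrated in even degrees (after reducing mod the relevant structure). The base case $Y_0$ (a point, or the first q-cell) is immediate.

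The core of the argument is the analysis of a single attachment. Attaching the q-cell $e^{2k_i}/G_i$ to $Y_{i-1}$ is a cofibration, so I would write down the long exact sequence of the pair $(Y_i, Y_{i-1})$ in integral homology. The crucial input is the identification of the relative homology $H_\ast(Y_i, Y_{i-1};\ZZ)$ with the local homology of the q-cell, which by excision reduces to $H_\ast(\overline{D}^{2k_i}/G_i, S^{2k_i-1}/G_i;\ZZ)$. Here I would invoke the observation flagged in the introduction: when $G_i$ acts linearly on the disc, the torsion in $H_\ast(S^{2k_i-1}/G_i;\ZZ)$ is supported only at primes dividing $|G_i|$, and since $\gcd\{p,|G_i|\}=1$, there is no $p$-torsion in this relative term and, after localizing at $p$ or tensoring with $\ZZ_{(p)}$, the relative homology looks like that of an ordinary even-dimensional cell. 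The long exact sequence then forces the connecting homomorphisms into odd degrees to vanish (localized at $p$), so $H_\ast(Y_i;\ZZ)$ remains $p$-torsion-free and the even-concentration persists.

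Concretely, I would localize the entire long exact sequence at $p$ (work with $\ZZ_{(p)}$ coefficients, where $|G_i|$ is invertible). In that localized sequence the relative term $H_\ast(Y_i,Y_{i-1};\ZZ_{(p)})$ is $\ZZ_{(p)}$ concentrated in the single even degree $2k_i$, exactly as for an ordinary cell, because inverting $|G_i|$ kills all the orbit-space torsion and collapses the group action on homology (a transfer/averaging argument shows $H_\ast(-/G_i;\ZZ_{(p)})\cong H_\ast(-;\ZZ_{(p)})^{G_i}$). With both $H_\ast(Y_{i-1};\ZZ_{(p)})$ and the relative term concentrated in even degrees by the inductive hypothesis, the long exact sequence splits into short exact sequences with no room for odd-degree classes, giving that $H_\ast(Y_i;\ZZ_{(p)})$ is free over $\ZZ_{(p)}$ and even-concentrated. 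Running this for every prime $p$ coprime to the relevant $|G_i|$, and noting $Y_\ell = X$, yields that $H_\ast(X;\ZZ)$ has no $p$-torsion; the statement $H_{\text{odd}}(X;\ZZ_p)=0$ then follows from the universal coefficient theorem, since $p$-torsion-freeness plus even concentration leaves no odd mod-$p$ homology.

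I expect the main obstacle to be the precise justification that the localized relative homology of a single q-cell is $\ZZ_{(p)}$ in degree $2k_i$ and zero elsewhere, i.e.\ pinning down the transfer argument that $H_\ast(S^{2k_i-1}/G_i;\ZZ_{(p)}) \cong H_\ast(S^{2k_i-1};\ZZ_{(p)})^{G_i}$ and verifying the invariants behave as for a genuine even cell. One must be careful that the $G_i$-action and the attaching map interact correctly so that excision legitimately isolates the local contribution, and that the even-degree bookkeeping is uniform across all the simultaneously-attached cells of a given dimension rather than just one cell. Once the single-attachment computation is nailed down with $|G_i|$ invertible, the inductive passage through the building sequence should be routine.
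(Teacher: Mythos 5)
Your proposal is correct and follows essentially the same route as the paper: induction along the building sequence, the long exact sequence of the cofibration $S^{2k_i-1}/G_i \to Y_{i-1} \hookrightarrow Y_i$, and the transfer isomorphism showing that $H_*(S^{2k_i-1}/G_i;\ZZ)$ carries only $|G_i|$-torsion. The only cosmetic differences are that you phrase the bookkeeping via localization at $p$ rather than tracking $p$-torsion and mod-$p$ odd homology directly, and that the relative term in degree $2k_i$ can be $0$ rather than $\ZZ_{(p)}$ when $G_i$ contains orientation-reversing elements---neither of which affects the argument.
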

\nd Successive applications of this theorem yield its important theorem.
\begin{theorem}\label{thm_even_cells_no_torsion}
Let $X$ be a {\bf q}--CW complex with no odd dimensional {\bf q}--cells. 
If for each prime $p$ there is a building sequence 
$\{(Y_{i},0_{i})\}_{i = 1}^{\ell}$ such that $\gcd \{p, |G_{i}|\} = 1$ for all
$i$ with $e^{2k_{i}} /G_{i} = Y_{i}\setminus Y_{i-1}$, 
then $H_{\ast}(X; \mathbb{Z})$ has no torsion and 
$H_{\text{odd}}(X; \mathbb{Z})$ is trivial.
\end{theorem}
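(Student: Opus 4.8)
The plan is to deduce the statement from Theorem \ref{thm_even_cells_no_p-torsion} one prime at a time, and then to upgrade the resulting mod-$p$ vanishing of odd homology to an integral statement using the universal coefficient theorem together with finite generation. First I would fix an arbitrary prime $p$. By hypothesis there is a building sequence $\{(Y_i,0_i)\}_{i=1}^{\ell}$ adapted to $p$, that is, with $\gcd\{p,|G_i|\}=1$ whenever $e^{2k_i}/G_i = Y_i\setminus Y_{i-1}$. Since $X$ has no odd-dimensional {\bf q}-cells, Theorem \ref{thm_even_cells_no_p-torsion} applies verbatim and yields two conclusions that are intrinsic to $X$, and hence independent of which building sequence was used to obtain them: $H_{\ast}(X;\mathbb{Z})$ has no $p$-torsion, and $H_{\text{odd}}(X;\mathbb{Z}_p)=0$. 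As $p$ was arbitrary, the first conclusion holds for every prime, so $H_{\ast}(X;\mathbb{Z})$ has no torsion at all; equivalently, each $H_n(X;\mathbb{Z})$ is free abelian. Moreover $X=Y_{\ell}$ is assembled from the finitely many {\bf q}-cells of a building sequence, so each $H_n(X;\mathbb{Z})$ is finitely generated; this finiteness will be essential below.

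Next I would invoke the universal coefficient theorem in homology,
\[
H_n(X;\mathbb{Z}_p) \cong \big(H_n(X;\mathbb{Z})\otimes\mathbb{Z}_p\big) \oplus \mathrm{Tor}\big(H_{n-1}(X;\mathbb{Z}),\mathbb{Z}_p\big).
\]
Because $H_{\ast}(X;\mathbb{Z})$ is torsion-free, the Tor summand vanishes, giving a natural isomorphism $H_n(X;\mathbb{Z}_p)\cong H_n(X;\mathbb{Z})\otimes\mathbb{Z}_p$. For odd $n$ the left-hand side is trivial, whence $H_n(X;\mathbb{Z})\otimes\mathbb{Z}_p=0$. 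Writing $H_n(X;\mathbb{Z})\cong\mathbb{Z}^{r_n}$, which is legitimate by finite generation and torsion-freeness, we obtain $(\mathbb{Z}_p)^{r_n}=0$, so $r_n=0$ and $H_n(X;\mathbb{Z})=0$ for every odd $n$. This proves that $H_{\text{odd}}(X;\mathbb{Z})$ is trivial and completes the argument.

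The deduction is essentially formal once Theorem \ref{thm_even_cells_no_p-torsion} is in hand, so I do not expect any serious obstacle. The one point demanding care is that finite generation of $H_n(X;\mathbb{Z})$ cannot be dispensed with: a torsion-free group $A$ with $A\otimes\mathbb{Z}_p=0$ for all $p$ need not vanish (for instance $A=\mathbb{Q}$ satisfies $\mathbb{Q}\otimes\mathbb{Z}_p=0$), so the conclusion $r_n=0$ genuinely relies on identifying $H_n(X;\mathbb{Z})$ with a free abelian group of finite rank. A secondary point worth stating explicitly is that the hypothesis permits the building sequence to depend on $p$; this is harmless precisely because the output of Theorem \ref{thm_even_cells_no_p-torsion}, namely the $p$-torsion-freeness of $H_{\ast}(X;\mathbb{Z})$ and the triviality of $H_{\text{odd}}(X;\mathbb{Z}_p)$, refers only to $X$ and not to the particular sequence used to establish it.
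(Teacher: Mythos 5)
Your proposal is correct and follows the same route as the paper, which simply observes that Theorem \ref{thm_even_cells_no_torsion} follows by applying Theorem \ref{thm_even_cells_no_p-torsion} to each prime $p$. Your additional care with the universal coefficient theorem and the finite generation of $H_n(X;\mathbb{Z})$ (needed to rule out pathologies like $\mathbb{Q}$) fills in exactly the formal deduction the paper leaves implicit.
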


 Applications and examples are given in Section \ref{sec_Application_and_examples}. 
The topological construction of a toric orbifold  $X(Q,\lambda)$, arising
from a simple polytope $Q$ and an $\mathcal{R}$-characteristic 
pair $(Q,\lambda)$, is reviewed in Subsection \ref{toric_orbifolds}. 
Then, we re-visit the notion of a 
{\em retraction sequence}  of triples, $\{(B_{k},E_{k},v_{k})\}_{k=1}^{\ell}$, 
for a simple polytope $Q$, which was introduced in \cite{BSS}. 
Each space $B_{k}$ is a polytopal complex obtained by removing 
successively, carefully chosen vertices, beginning with $Q$ itself. 
Each $E_{k}$ is a particular face in $B_{k}$ and $v_{k}$ is a 
special vertex in  $E_{k}$ called a {\em free\/} vertex. 
Then Proposition \ref{ret_build} provides the means by which we can use a 
{\bf q}-CW structure to study toric orbifolds. 
 So, a toric orbifold which satisfies the hypothesis of Theorem \ref{thm_even_cells_no_torsion}
has cohomology which is concentrated in even degrees. 
Moreover, from the point of view of retraction sequences, the orders
$|G_{i}|$ appearing in Theorem \ref{thm_even_cells_no_torsion}, 
are computable explicitly from the $\mathcal{R}$-characteristic data 
$(Q,\lambda)$, which may be denoted as $g_{E_i}(v_i)$. This result appears in
Theorem \ref{no-tor-on-toric}.
 An example is given next which proves that the gcd condition in Theorem
  \ref{no-tor-on-toric} is {\em weaker\/} than the hypothesis of
\cite[Theorem $1.1$]{BSS}, the main result in that paper about the
 integral cohomology of toric orbifolds.

Our attention turns in Subsection \ref{toric_varieties} to the study of certain projective toric varieties. Following a brief review
of the basic construction of a toric variety $X_{\Sigma}$ from a fan $\Sigma$, we note that a 
{\em simplicial\/} {\em polytopal \/} fan arises as the {\em normal\/} fan of a simple polytope $Q$. Moreover, the fan determines an $\mathcal{R}$-characteristic pair $(Q,\lambda)$. In this case, we have  
$$X(Q,\lambda) \cong X_{\Sigma}$$

\nd as orbifolds with torus action. The results of Section \ref{sec_hom_q-cw_comp} are applied then to this class of toric varieties.
When the results are combined with \cite[Theorem 5.4]{BSS}, we get a complete description of the integral
cohomology ring under conditions {\em weaker\/} than those of \cite[Theorem 1.2]{BSS}.
%

In Subsection \ref{subsec_torus_orb}, retraction sequence 
techniques are shown, under certain conditions, to apply 
equally well to a generalization of a toric orbifold called a 
{\it torus orbifold\/}, introduced by Hattori and Masuda\/ \cite{HM}. 
Here, the simple polytope $Q$ is replaced by a manifold 
with corners $P$ which has additional properties. 
As observed by Masuda and Panov \cite[Section $4.2$]{MP}, 
they too can be constructed by an $\mathcal{R}$-characteristic 
pair $(P,\lambda)$. This allows us to obtain a result for torus 
orbifolds similar to Theorem \ref{no-tor-on-toric}.

A discussion of weighted Grassmannians follows in Subsection 
\ref{subsec_wGr}. Abe and Matsumura \cite[Section 2]{AM} 
show that the Schubert cell structure, which is indexed by Young 
diagrams, determines a {\bf q}-cell structure on weighted 
Grassmannians. This allows for an application of Theorem 
\ref{thm_even_cells_no_p-torsion} at the end of this subsection. 

The paper concludes with a discussion of {\bf q}-CW complexes 
which may have odd dimensional {\bf q}-cells. 
The main theorem gives a condition on a prime $p$ and  a 
building sequence $\{(Y_{i},0_{i})\}_{i = 1}^{\ell}$
for a {\bf q}-CW complex $X$ which ensures that 
$H_{\ast}(X;\mathbb{Z})$ has no $p$-torsion.
Also a condition of having $p$-torsion in $H_{\ast}(X;\mathbb{Z})$
is given.

{\bf Acknowledgements}: This work was supported in part by grants 210386 and 426160 from Simons Foundation. The fourth author was partially supported by Basic
Science Research Program through the National Research Foundation of Korea (NRF) funded by the Ministry of Science, ICT $\&$ Future Planning (No. 2016R1A2B4010823)

\section{$\mathbf{q}$-CW complexes}\label{sec_q-CW_complex}
In this section,  we introduce the notion of $\mathbf{q}$-CW complex 
structure in which the analogue of an open cell  is the quotient of an open disk by an action 
of a finite group. The construction mirrors the construction of ordinary
CW complexes given, for example, in Hatcher \cite{Hat}. We note that 
$\mathbf{q}$-CW complex structures were used to compute the rational 
homology of certain singular spaces having torus symmetries in \cite{PS}.

\begin{definition}  
 Let $G$ be a finite group acting linearly on a closed
 $n$-dimensional disc $\bar{D}^{n}$ centered at the origin. Such an action
preserves $\partial{\bar{D}^n} = S^{n-1}$. We call the quotient $ \bar{D}^{n}/G $
 an $n$-dimensional $\mathbf{q}$-{\it disc}. If $\overline{e}^n$ is $G$-equivariantly
  homeomorphic to $\overline{D}^n$, we call $e^n/G$ a $\mathbf{q}$-{\em cell}. By abuse of notation,
 we shall denote the boundary of $\bar{e}^n$ by $S^{n-1}$ without confusion.
\end{definition}

A $\mathbf{q}$-CW complex is constructed inductively as follows.
Start with a discrete set $ X_{0} $, where points are regarded as $0$-dimensional 
$\mathbf{q}$-cells. Inductively, form the $n$-dimensional $\mathbf{q}$-skeleton
$ X_{n} $ from $ X_{n-1} $ by attaching finitely many $n$-dimensional $\mathbf{q}$-cells
$ \{ e^{n}/{G_{\alpha}}\}$ via continuous maps $\{\phi_{\alpha}\}$ where
 $ \phi_{\alpha} : S^{n-1}/G_{\alpha} \to X_{n-1} $. This means that $ X_{n}$
 is the quotient space of the disjoint
union $ X_{n-1}\bigsqcup_{\alpha}  \{\bar{e}^n/G_{\alpha}\} $ of $ X_{n-1} $ with a
finite collection of $n$-dimensional $\mathbf{q}$-disks
$ \bar{e}^{n}/G_{\alpha} $ under the identification
$ x\sim \phi_{\alpha}(x)$  for $ x\in S^{n-1}/G_{\alpha} $.

 If $X = X_n$ for some finite $n$, we call $X$ a {\em finite} $\mathbf{q}$-{\em CW complex}
 of dimension $n$.  The topology of $ X $ is the quotient topology built inductively.
 We say $Y$ is a $\mathbf{q}$-CW subcomplex of $X$, if $Y$
 is a $\mathbf{q}$-CW complex and $Y$ is a subset of $X$.

\begin{definition}
\begin{enumerate}
\item Let $0_{i} \in X$ denote the image of origin in the $\mathbf{q}$-cell $e^{k_i}/G_{i}$.
We call $0_i$ a {\it special point} corresponding 
to the $i$-th $\mathbf{q}$-cell. We may also refer to $0_{i}$ as a special
point of $X$.

\item Let $Y$ be a $\mathbf{q}$-CW subcomplex of a $\mathbf{q}$-CW complex $X$.
Then a special point $0_{i} \in Y$ is called \emph{free} in $Y$ if there is a
neighborhood of $0_{i}$ in $Y$ homeomorphic to $e^k/G_{i}$
for some $k \in \Z_{\geq 0}$ and finite group $G_i$.
\end{enumerate}
\end{definition}

\begin{remark}
\begin{enumerate}
\item Every finite $\mathbf{q}$-CW complex has a free special point. A finite 
$\mathbf{q}$-CW complex can be obtained by attaching one $\mathbf{q}$-cell at each time.
Attaching $\mathbf{q}$-cells need not be in increasing dimension. 
 
\item If $0_{i}$ is a free special point of a $\mathbf{q}$-CW complex $Y$,
then $Y$ can be obtained from a $\mathbf{q}$-CW subcomplex $Y'$ by attaching a
$\mathbf{q}$-cell $\bar{e}^{k_{i}}/G_{i}$. In this case, the natural deformation
retract from  $(\bar{e}^{k_{i}} \setminus \{0_{i}\})/G_{i}$ to $\partial{\bar{e}^{k_{i}}}/G_{i}$
induces a deformation retract from $Y \setminus \{0_{i}\}$ to $Y'$.

\item Let $S^{k-1}_{\frac{1}{2}} = \{x \in e^k \cong D^k ~|~ |x| = \frac{1}{2}\}$.
Then $S^{k-1}_{\frac{1}{2}}/G_i \cong S^{k-1}/G_i$ and the previous
remark implies that the following is a cofibration:
\begin{equation}
\begin{tikzcd}
S^{k-1}/G_i \arrow{r}{\phi_i} & Y'  \arrow[hookrightarrow]{r} & Y
\end{tikzcd}
\end{equation}
 where $\phi_i$ is defined by the deformation retraction.

\item If $k_{i}=0, 1, 2$ then $\bar{e}^{k_{i}}/G_{i}$ is homeomorphic
to a closed disc of dimension $k_{i}$. So attaching 
$\bar{e}^{k_{i}}/G_{i}$ is nothing but attaching a disc of dimension
 $0, 1$ or $ 2$. Hence we may assume $G_{i}$ is trivial in these cases.

\item Following Goresky \cite{Gor} one may obtain a CW structure on an effective
orbifold. Often however, it is too complicated for computational purpose.
\end{enumerate}
\end{remark}

\begin{prop}\label{prop_qCW_simp_complex}
If $X$ is a $\mathbf{q}$-CW complex, then $X$ is homotopic to
a simplicial complex. 
\end{prop}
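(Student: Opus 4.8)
The plan is to induct on the number of $\mathbf{q}$-cells, using the building-sequence description. By the preceding Remark, a finite $\mathbf{q}$-CW complex $X$ is obtained from $X_{0}$ (a finite discrete set, which is already a simplicial complex) by attaching one $\mathbf{q}$-cell at a time, and each attachment exhibits $Y_{i}$ as the mapping cone of the attaching map
\[
\phi_{i}\colon S^{k_{i}-1}/G_{i} \longrightarrow Y_{i-1},
\]
since the $\mathbf{q}$-disc $\bar{e}^{k_{i}}/G_{i}$ is canonically the cone $C(S^{k_{i}-1}/G_{i})$, the linear $G_{i}$-action commuting with the radial contraction. It therefore suffices to show that if $Y_{i-1}$ is homotopy equivalent to a simplicial complex, then so is $Y_{i}=Y_{i-1}\cup_{\phi_{i}} C(S^{k_{i}-1}/G_{i})$.

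The geometric input I would isolate first is that each boundary $A_{i}:=S^{k_{i}-1}/G_{i}$ is triangulable. After conjugating the linear $G_{i}$-action to an orthogonal one (a finite subgroup of the general linear group is conjugate into the orthogonal group), $G_{i}$ acts smoothly on the sphere $S^{k_{i}-1}$; by equivariant triangulation such an action admits a $G_{i}$-invariant simplicial structure, and after one barycentric subdivision the action becomes regular, so that the orbit space $A_{i}$ inherits the structure of a finite simplicial complex. Consequently the $\mathbf{q}$-disc $C(A_{i})$ is triangulable as well, with $A_{i}$ a subcomplex.

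For the inductive step I would pass through the mapping cone and invoke homotopy invariance. Fix a homotopy equivalence $h\colon Y_{i-1}\xrightarrow{\ \simeq\ } L_{i-1}$ onto a simplicial complex. Because $A_{i}\hookrightarrow C(A_{i})$ is a cofibration, the gluing lemma for homotopy pushouts gives $Y_{i}\simeq C_{h\phi_{i}}$, the mapping cone of $h\phi_{i}\colon A_{i}\to L_{i-1}$. Now $h\phi_{i}$ is a continuous map between finite simplicial complexes, so by the simplicial approximation theorem it is homotopic, after finitely many barycentric subdivisions of $A_{i}$, to a simplicial map $\psi_{i}$; since subdivision does not change the homeomorphism type of $C(A_{i})$ and the mapping cone depends only on the homotopy class of the attaching map, $Y_{i}\simeq C_{\psi_{i}}$. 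Finally, the mapping cone of a simplicial map is triangulable: writing $M_{\psi_{i}}$ for the simplicial mapping cylinder (a simplicial complex containing $A_{i}$ and $L_{i-1}$ as subcomplexes and deformation retracting onto $L_{i-1}$), one has $C_{\psi_{i}}\simeq M_{\psi_{i}}\cup_{A_{i}} C(A_{i})$, a union of two simplicial complexes along the common subcomplex $A_{i}$, hence itself a simplicial complex. This completes the induction.

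The main obstacle is the geometric lemma of the second step, namely that the orbit space $S^{k_{i}-1}/G_{i}$ of a finite linear action carries a triangulation; everything after that is formal homotopy theory, resting on the cofibration and gluing lemma, simplicial approximation, and the simplicial mapping-cylinder construction. One point to watch is that a building sequence may attach cells out of order of dimension, but this causes no difficulty, since neither simplicial approximation nor the gluing lemma is sensitive to the dimension in which $\phi_{i}$ lands.
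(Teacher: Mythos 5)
Your proposal is correct and follows essentially the same route as the paper's proof: induction over the $\mathbf{q}$-cells, triangulation of the $\mathbf{q}$-disc and its boundary orbit space via equivariant triangulation (the paper cites Illman, Theorem 3.6, where you conjugate to an orthogonal action and subdivide by hand), simplicial approximation of the attaching map, and the simplicial mapping-cylinder argument that the paper delegates to Hatcher, Theorem 2C.5. The only detail worth adjusting is that making a simplicial group action regular, so that the quotient is again a simplicial complex, generally requires the \emph{second} barycentric subdivision rather than the first; this is harmless since Illman's theorem hands you the triangulated orbit space directly.
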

\begin{proof}
We prove it by induction. By definition of $\mathbf{q}$-CW complex, $X_0$ has a
simplicial complex structure. Suppose by induction, the space $X_{i-1}$ has a
simplicial complex structure. Assume that $X_i$ is obtained by attaching a
$\mathbf{q}$-cell $\bar{e}^k/G_i$ via the map $\phi_i : S^{k-1}/G_i \to X_{i-1}$.
Using Theorem 3.6 of Illman \cite{Il}, we get a simplicial complex structure
on $\bar{e}^k/G_i$. Then by the simplicial approximation theorem (\cite[Theorem2C.1]{Hat}),
there is a homotopy from $\phi_i$ to a simplicial map $\xi_i : S^{k-1}/G_i \to X_{i-1}$.
Then one can complete the proof by following the arguments in the proof of Theorem 2C.5
of \cite{Hat}.
\end{proof}

Now, we introduce the following definition of \emph{building sequence} which
enables us to compute the integral cohomology ring of certain spaces with
$\mathbf{q}$-CW complex structure and detect the existence of $p$-torsion.

\begin{definition}
Let $X$ be a finite $\mathbf{q}$-CW complex with $\ell$ many 
$\mathbf{q}$-cells $e^{k_i}/G_i$ for $i \in \{1, \ldots, \ell\}$
and $1 \leq k_i \leq \dim X$. Let $Y_i$ be $i$-th stage 
$\mathbf{q}$-CW subcomplex of $X$ and $0_i$ a free special point 
in $Y_i$. We say $\{(Y_i, 0_i)\}_{i =1}^{\ell}$ is a {\em building sequence} for $X$.
\end{definition}   

We recall that the finite group $G$ acts on $\bar{D}^n$ linearly.
Thus the boundary $S^{n-1}$ is preserved by $G$-action. The quotient space $S^{n-1}/G$ is called an \emph{orbifold lens space} in \cite{BSS}.
We call a finite group $K$ a \emph{$|G|$-torsion} if $K$ is trivial or the prime factors 
of the order $|K|$ is a subset of the prime factors of $|G|$. 
Now, the transfer homomorphism \cite[III.2]{Bor} leads us the following lemma. 

\begin{prop}\label{prop_homology_orb_lens_sp}
The homology of an orbifold lens space $S^{n-1}/G$ is 
$$H_j(S^{n-1}/G;\ZZ)= \begin{cases} \Z & \text{ if } j=0,\\
a~ |G|\text{-torsion} &\text{ if } 1\leq j \leq n-2, \\
a ~|G|\text{-torsion} \text{ or } \ZZ & \text{ if } j=n-1.\end{cases} $$
In particular, if $G$-action preserves the orientation of $S^{n-1}$, 
then $H_{n-1}(S^{n-1}/G;\ZZ)\cong \ZZ$.  
\end{prop}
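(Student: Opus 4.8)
The plan is to exploit the orbit map $\pi\colon S^{n-1}\to S^{n-1}/G$ together with its transfer, reducing everything to the known homology of the sphere and the induced $G$-action on it. First I would record the basic shape of $S^{n-1}/G$: it is compact and, for $n\ge 2$, connected, so $H_0(S^{n-1}/G;\Z)=\Z$; by Illman's theorem \cite{Il} (as used in the proof of Proposition \ref{prop_qCW_simp_complex}) it admits a triangulation of dimension $n-1$, so its integral homology is finitely generated and vanishes above degree $n-1$.

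The main tool is the transfer $\tau\colon H_*(S^{n-1}/G;\Z)\to H_*(S^{n-1};\Z)$ attached to the finite group action, which satisfies $\pi_*\circ\tau=|G|\cdot\mathrm{id}$ on $H_*(S^{n-1}/G;\Z)$ (\cite[III.2]{Bor}; this requires no freeness of the action). For $1\le j\le n-2$ we have $H_j(S^{n-1};\Z)=0$, hence $\tau=0$ in these degrees, and therefore $|G|\cdot\mathrm{id}=\pi_*\circ\tau=0$ on $H_j(S^{n-1}/G;\Z)$. Thus $|G|$ annihilates these groups; being finitely generated they are finite, and every prime dividing the order of such a group divides $|G|$, i.e. they are $|G|$-torsion. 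This settles the middle range in one stroke.

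For the top degree $j=n-1$ I would first observe that $H_{n-1}(S^{n-1}/G;\Z)$ is free abelian: in the $(n-1)$-dimensional triangulation above there are no $n$-simplices, so $H_{n-1}$ is the kernel of $\partial_{n-1}$, a subgroup of a free abelian group. To pin down its rank I pass to rational coefficients, where the transfer yields $H_{n-1}(S^{n-1}/G;\Q)\cong H_{n-1}(S^{n-1};\Q)^{G}\cong \Q^{G}$, with $G$ acting on $H_{n-1}(S^{n-1};\Q)\cong\Q$ through the orientation characters $\pm 1$. If the action preserves orientation this invariant space is $\Q$, so the rank is $1$ and the free group $H_{n-1}(S^{n-1}/G;\Z)$ is $\Z$; if some element reverses orientation the invariants vanish, the rank is $0$, and the free group is $0$. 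Either outcome is covered by ``$|G|$-torsion or $\Z$'', and the orientation-preserving case gives the asserted $H_{n-1}\cong\Z$.

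I expect the main obstacle to be the bookkeeping at the top degree and the legitimacy of the transfer for a possibly non-free linear action. Concretely, the two points needing care are: (i) citing the transfer in the form $\pi_*\tau=|G|$ with integral coefficients for an \emph{arbitrary} finite group action (not merely a covering); and (ii) correctly identifying the $G$-representation on $H_{n-1}(S^{n-1};\Q)$ as the orientation character and computing its invariants, which is precisely what separates the $\Z$ case from the vanishing case. The middle-degree argument, by contrast, is essentially immediate once the transfer identity is in hand.
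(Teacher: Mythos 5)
Your proof is correct, and it reaches the conclusion by a route that differs in its mechanics from the paper's, even though both hinge on the transfer of \cite[III.2]{Bor}. The paper works in \emph{cohomology with field coefficients}: it invokes the isomorphism $H^\ast(X/G;\mathbf{k})\cong H^\ast(X;\mathbf{k})^G$ for $\mathbf{k}=\Q$ or $\mathbf{k}=\Z_p$ with $p\nmid |G|$, concludes that $H^j(S^{n-1}/G;\mathbf{k})$ vanishes for $1\le j\le n-2$, and then uses the universal coefficient theorem to rule out both a free part and $p$-torsion for $p\nmid|G|$ in the integral homology. You instead use the \emph{integral} homology transfer and the identity $\pi_\ast\circ\tau=|G|\cdot\mathrm{id}$ to annihilate the middle-range groups outright; combined with finite generation (which you correctly source from Illman's triangulation -- an input the paper leaves implicit) this gives the $|G|$-torsion conclusion in one step. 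Your worry (i) about the validity of $\pi_\ast\tau=|G|$ for a non-free action is legitimate but resolvable: after triangulating equivariantly and barycentrically subdividing so that $G$ acts without inversions, the chain map $\bar\sigma\mapsto\sum_{g\in G}g\tilde\sigma$ (sum over the full group applied to a chosen lift, not over the fiber) is well defined and satisfies $\pi_\ast\tau=|G|$ on the nose; this is the form of the transfer treated in \cite[III.2]{Bor}. In the top degree your argument is more detailed than the paper's: the paper simply asserts $H_{n-1}\cong\Z$ in the orientation-preserving case via orientability of the quotient, whereas your combination of ``$H_{n-1}$ is free abelian because the complex is $(n-1)$-dimensional'' with the rank computation $H_{n-1}(S^{n-1}/G;\Q)\cong\Q^G$ through the orientation character actually pins down $H_{n-1}$ as exactly $\Z$ or $0$, which is sharper than the stated ``$|G|$-torsion or $\Z$'' (the trivial group being a $|G|$-torsion by the paper's convention). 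The trade-off is that the paper's field-coefficient formulation also delivers Corollary \ref{no_tor_in_lens} (vanishing of $H_j(S^{n-1}/G;\Z_p)$ for $j\ne 0,n-1$) with no extra work, whereas from your annihilation statement that corollary still needs one more pass through the universal coefficient theorem.
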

\begin{proof}	
We see $H_0(S^{n-1}/G;\ZZ)\cong \ZZ$ trivially. 
For $j=1, \dots, n-2$, consider the following isomorphism 
\begin{equation}\label{eq_transfer_isom}
H^\ast(X/G;\mathbf{k}) \cong H^\ast(X;\mathbf{k})^G,
\end{equation}
where $X$ is a locally compact Hausdorff space, $G$ is a finite group 
and $\mathbf{k}$ is a field of characteristic zero or coprime to $|G|$, 
see \cite[III.2]{Bor}. 

The result follows now from the universal coefficient theorem. 
In particular, if $G$-action preserves the orientation, then 
$H_{n-1}(S^{n-1}/G;\ZZ)\cong \ZZ$ because $S^{n-1}/G$ is orientable. 
\end{proof}

The universal coefficient theorem leads to the following. 
\begin{corollary}\label{no_tor_in_lens}
If $p$ is coprime to the prime factors $p_1, \dots, p_r$ of $|G|$, 
then the group $H_{\ast}(S^{n-1}/G;\ZZ)$ has
no $p$ torsion and $H_{j}(S^{n-1}/G; \ZZ_p)$ is trivial if $j \neq 0, n-1$.
\end{corollary}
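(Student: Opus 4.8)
The plan is to deduce everything directly from Proposition \ref{prop_homology_orb_lens_sp} together with the universal coefficient theorem, so that the argument is essentially bookkeeping on the homology groups already computed there. First I would record the shape of the integral homology given by that proposition: $H_0(S^{n-1}/G;\ZZ)\cong\ZZ$, the groups $H_j(S^{n-1}/G;\ZZ)$ for $1\le j\le n-2$ are $|G|$-torsion, and $H_{n-1}(S^{n-1}/G;\ZZ)$ is either $|G|$-torsion or $\ZZ$.

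For the claim that there is no $p$-torsion, I would observe that a $|G|$-torsion group $K$ is by definition finite with $|K|$ supported on the primes dividing $|G|$; since $p$ is coprime to each $p_i$, the prime $p$ does not divide $|K|$, so $K$ has trivial $p$-primary part and hence no $p$-torsion. As $\ZZ$ is torsion-free, the remaining groups $H_0$ and the free case of $H_{n-1}$ contribute no $p$-torsion either. This disposes of the first assertion.

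For the vanishing of $H_j(S^{n-1}/G;\ZZ_p)$ when $j\ne 0,n-1$, the key step is the universal coefficient theorem
\[
H_j(S^{n-1}/G;\ZZ_p)\;\cong\;\bigl(H_j(S^{n-1}/G;\ZZ)\otimes\ZZ_p\bigr)\oplus \mathrm{Tor}\bigl(H_{j-1}(S^{n-1}/G;\ZZ),\ZZ_p\bigr).
\]
Restricting to $1\le j\le n-2$, the group $H_j$ is a finite abelian $|G|$-torsion group; writing it via the structure theorem as a sum of cyclic groups $\ZZ/m$ with $\gcd(m,p)=1$ gives $H_j\otimes\ZZ_p=0$. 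For the Tor term, when $2\le j\le n-2$ the group $H_{j-1}$ is again $|G|$-torsion and $\mathrm{Tor}(\ZZ/m,\ZZ_p)=\ZZ/\gcd(m,p)=0$; the one boundary case is $j=1$, where $H_{j-1}=H_0\cong\ZZ$ is free and the Tor term vanishes automatically. Hence both summands vanish and $H_j(S^{n-1}/G;\ZZ_p)=0$ for all $1\le j\le n-2$.

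I do not anticipate a genuine obstacle here, since the substantive content lies entirely in Proposition \ref{prop_homology_orb_lens_sp}. The one point demanding care is the index $j=n-1$, which must be excluded: although the Tor contribution coming from $H_{n-2}$ still vanishes, the term $H_{n-1}\otimes\ZZ_p$ need not, precisely because $H_{n-1}$ may be $\ZZ$ in the orientation-preserving case, whence $H_{n-1}(S^{n-1}/G;\ZZ_p)\cong\ZZ_p\ne 0$. Keeping track of this top-degree exception, and of the $j=1$ endpoint where $H_0$ is free rather than torsion, is the only subtlety.
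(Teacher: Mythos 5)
Your proposal is correct and follows exactly the route the paper intends: the paper's entire ``proof'' is the single sentence ``The universal coefficient theorem leads to the following,'' deferring all substance to Proposition \ref{prop_homology_orb_lens_sp}, and your bookkeeping (no $p$-torsion in a finite group of order supported on the primes of $|G|$, vanishing of both the tensor and Tor terms for $1\le j\le n-2$, with the $j=1$ and $j=n-1$ endpoints handled separately) is precisely the argument being elided. Nothing to change.
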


We end this section by discussing the degree of an attaching map.
Let $X$ be a $\mathbf{q}$-CW complex and $\phi : S^k/G \to X_j \subseteq X$
an attaching map where $X_j$ is the $j$-dimensional $\mathbf{q}$-skeleton.
Then $\phi$ induces homomorphisms 
$$(\phi_{\ast})_k : H_k(S^k/G; \ZZ) \to H_k(X_j; \ZZ).$$
Now, $H_k(X_j; \ZZ)$ is isomorphic to $\ZZ^s \oplus K$ for some non-negative 
integer $s$ and finite group $K$. By Proposition \ref{prop_homology_orb_lens_sp},
we have $H_k(S^k/G; \ZZ)$ is finite or $\ZZ$. 
The next definition of {\it degree} will play an important role in Section \ref{sec_cells_in_every_dim}.

\begin{definition}\label{def_degree}
Let $H_k(S^k/G; \ZZ) \cong \ZZ$ and write $(\phi_{\ast})_k(1)=(d_1, \ldots, d_s, x)$ for 
$1\in \ZZ$. 
We define the \emph{degree} of $\phi$ by 
\begin{equation}\label{eq_degree_of_attaching}
\deg \phi:= \gcd\{d_i \mid 1\leq i \leq s, ~ d_i\neq 0 \}.
\end{equation}
If all $d_1, \ldots, d_s$ are zero or $H_k(X_j;\ZZ)$ has no free part, then we define degree of $\phi$ to be 0.
\end{definition}

\section{Integral homology of $\mathbf{q}$-CW complexes with cells in even dimensions}
\label{sec_hom_q-cw_comp}

This section is devoted to the proof of Theorem \ref{thm_even_cells_no_p-torsion}, 
which gives a sufficient condition for determining the torsion in 
the cohomology of certain families of finite $\mathbf{q}$-CW complexes. 
Notice that one can verify Theorem \ref{thm_even_cells_no_torsion} 
by applying Theorem \ref{thm_even_cells_no_p-torsion} for each prime $p$.
%
%
%

\begin{proof}[Proof of Theorem \ref{thm_even_cells_no_p-torsion}]
We prove the claim by the induction on the free special points in the sequence 
$\{(Y_i, 0_i)\}_{i=1}^\ell$. First, notice that if $i=1$, $Y_i$ is a point and if 
$i=2$ then $Y_i$ is a $\mathbf{q}$-CW complex obtained by attaching an even dimensional
$\mathbf{q}$-cell to a point. That is, $Y_2$ is homeomorphic to a suspension
on $S^{2k_i-1}/G_i$. Then by Proposition \ref{prop_homology_orb_lens_sp}, the 
claim is true.  
	
Now we assume that $H_{\ast}(Y_{i - 1}; \Z)$ has no $p$-torsion
 and $H_{odd}(Y_{i -1}; \ZZ_p)$ is trivial for $i >1$. To complete the induction,
 we shall prove that the same holds for $Y_i$. Note that $Y_i$
 can be constructed from $Y_{i-1}$ by attaching a $\mathbf{q}$-cell
 $\overline{e}^{2k_i}/G_i$ to it. So we have the cofibration:
\begin{equation}
\begin{tikzcd}
S^{2k_i-1}/G_i \arrow{r} & Y_{i-1} \arrow[hookrightarrow]{r} & Y_i.
\end{tikzcd}
\end{equation}

This cofibration induces the following long exact sequence in homology, 
\begin{equation}\label{eq_les_of_pair}
\begin{tikzcd}[row sep=tiny]
\cdots \arrow{r} & H_{j+1}(Y_{i}) \arrow{r} & \widetilde{H}_{j}(S^{2k_i -1}/G_i) \arrow{r} & \quad \\
H_{j}(Y_{i -1})  \arrow{r} &  H_{j}(Y_i) \arrow{r} & \widetilde{H}_{j-1}(S^{2k_i - 1}/G_i) \arrow{r} & \cdots.
\end{tikzcd}
\end{equation}

Suppose that $j$ is odd. By the induction hypothesis, the group $H_j(Y_{i - 1}; \Z)$
 has no $p$-torsion and $H_j(Y_{i -1}; \ZZ_p)$ is trivial. On the other hand,
 $\widetilde{H}_{j-1}(S^{2k_i - 1}/G_i; \Z)$  has no $p$-torsion and 
 $H_{j-1}(S^{2k_i -1}/G_i; \ZZ_p)$ is trivial  by Corollary \ref{no_tor_in_lens}.
 Therefore, from \eqref{eq_les_of_pair} we have $H_j(Y_{i}; \Z)$ has no $p$-torsion
 and $H_j(Y_{i}; \ZZ_p)$ is trivial.

Next, we assume that $j$ is even. Then, in the exact sequence 
\eqref{eq_les_of_pair}, the groups $\widetilde{H}_{j-1}(S^{2k_i - 1}/G_i; \Z)$,
$H_j(Y_{i - 1}; \Z)$ and $\widetilde{H}_{j}(S^{2k_i - 1}/G_i; \Z)$
have no $p$-torsion. Therefore $H_j(Y_{i}; \Z)$ has no $p$-torsion.
This completes the induction. 
\end{proof}


\section{Applications and Examples}\label{sec_Application_and_examples}
The goal of this section is to illustrate the results of Section \ref{sec_hom_q-cw_comp}
with some broad applications which improve upon certain previous results.

\subsection{Toric orbifolds}\label{toric_orbifolds}
Toric orbifolds were introduced in \cite{DJ} and explicitly studied in 
\cite{PS}\footnote{After  \cite{DJ}, \emph{toric manifold} was renamed in 
\cite{BP} as \emph{quasitoric manifold}, and  
the authors of \cite{PS} used \emph{quasitoric orbifolds} instead of 
\emph{toric orbifolds} of \cite{DJ}. }.
In \cite{BSS}, the authors introduced the concept of a retraction 
sequence\footnote{Retraction sequences have strong connection with shellability of 
a simplicial complex. The authors of \cite{BSS} are preparing a separated 
paper about this. } for a simple polytope and discussed the integral homology and
cohomology of toric orbifolds. In this subsection we show that a retraction
sequence determines a $\mathbf{q}$-CW structure on the toric orbifold and then
we compare the main theorem of \cite[Theorem 1.1]{BSS} and Theorem
\ref{thm_even_cells_no_torsion}. For convenience we adhere the notation of \cite{BSS}.  

We begin by summarizing the definition of a \emph{retraction sequence}
of a simple polytope.  
Given an $n$-dimensional simple polytope $Q$ with $\ell$ vertices, 
we construct a sequence of triples $\{(B_k, E_k, b_k)\}_{1 \leq k \leq \ell}$, inductively. 

The first term is defined by $B_1 = Q=E_1$ and $b_1$ is a vertex of $B_1$.
Next, given $(k-1)$-th term $(B_{k-1}, E_{k-1}, b_{k-1})$, the next term 
$(B_{k}, E_{k}, b_{k})$ is defined by setting
\begin{equation}\label{eq_B_k_in_retraction_seq}
B_{k}=\bigcup \{E \mid  E \text{ is a face of } B_{k-1} \text{ and } b_{k-1}\notin V(E)\}.
\end{equation}
If a vertex $b_k$ exists in $B_k$ satisfying that $b_k$ has a neighborhood 
homeomorphic to $\RR^{d}_{\geq 0}$ as manifold with corners 
for some $d \in \{1, \dots, \dim B_{k}\}$, then we let $E_k$ be the unique face 
containing $b_k$ with $\dim E_k=d$. We call $b_k$ a \emph{free vertex} of $B_k$.
Such a sequence is called a {\it retraction sequence} for $Q$ if the sequence ends
up with $(B_\ell, E_\ell, b_\ell)$ such that $B_\ell=E_\ell=b_\ell$ for some 
vertex $b_\ell \in V(Q)$. 

We call a simple polytope $Q$ \emph{admissible} if 
there exists at least one free vertex in each polytopal subcomplex 
$B_k$ defined in \eqref{eq_B_k_in_retraction_seq}. 
Hence, given $k$-th term $B_k$ as defined in \eqref{eq_B_k_in_retraction_seq}, 
any choice of free vertex of $B_k$ defines a retraction sequence. 
Two different retraction sequences of a prism are described in 
Figure \ref{fig_ret_of_prism} below, which shows that the prism is admissible by 
its symmetry. 

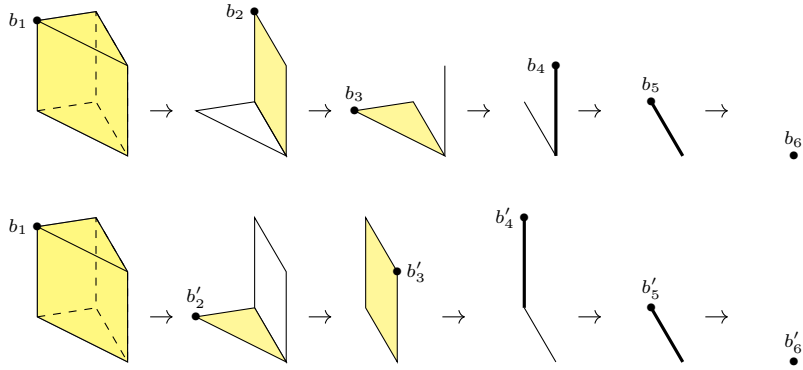
\begin{figure}[ht]
\begin{tikzpicture}[scale=0.6]
\draw[fill=yellow, opacity=0.5] (0,1)--(0,3)--(1.3,3.2)--(2,2)--(2,0)--cycle;
\node at (0,3) {\scriptsize$\bullet$};
\draw (0,1)--(2,0)--(2,2)--(0,3)--cycle;
\draw (0,3)--(1.3,3.2)--(2,2);
\draw[dashed] (1.3,3.2)--(2,2)--(2,0)--(1.3,1.2)--cycle;
\draw[dashed] (0,1)--(1.3,1.2);
\node[left] at (0,3) {\scriptsize $b_1$};
\draw[->] (2.5,1)--(3,1);

\begin{scope}[xshift=100]
\draw[fill=yellow!50] (1.3,3.2)--(2,2)--(2,0)--(1.3,1.2)--cycle;
\draw (0,1)--(1.3,1.2)--(2,0)--cycle;
\node at (1.3,3.2) {\scriptsize$\bullet$};
\node[left] at (1.3,3.2) {\scriptsize $b_2$};
\draw[->] (2.5,1)--(3,1);
\end{scope}

\begin{scope}[xshift=200]
\draw (2,2)--(2,0);
\draw[fill=yellow!50] (0,1)--(1.3,1.2)--(2,0)--cycle;
\node at (0,1) {\scriptsize$\bullet$};
\node[above] at (0,1) {\scriptsize $b_3$};
\draw[->] (2.5,1)--(3,1);
\end{scope}

\begin{scope}[xshift=270]
\draw[very thick] (2,2)--(2,0);
\draw (1.3,1.2)--(2,0);
\node at (2,2) {\scriptsize$\bullet$};
\node[left] at (2,2) {\scriptsize $b_4$};
\draw[->] (2.5,1)--(3,1);
\end{scope}

\begin{scope}[xshift=350]
\draw[very thick] (1.3,1.2)--(2,0);
\node at (1.3,1.2) {\scriptsize$\bullet$};
\node[above] at (1.3,1.2) {\scriptsize $b_5$};
\draw[->] (2.5,1)--(3,1);
\end{scope}

\begin{scope}[xshift=420]
\node at (2,0) {\scriptsize$\bullet$};
\node[above] at (2,0) {\scriptsize $b_6$};
\end{scope}


\begin{scope}[yshift=-130]
\draw[fill=yellow, opacity=0.5] (0,1)--(0,3)--(1.3,3.2)--(2,2)--(2,0)--cycle;
\node at (0,3) {\scriptsize$\bullet$};
\draw (0,1)--(2,0)--(2,2)--(0,3)--cycle;
\draw (0,3)--(1.3,3.2)--(2,2);
\draw[dashed] (1.3,3.2)--(2,2)--(2,0)--(1.3,1.2)--cycle;
\draw[dashed] (0,1)--(1.3,1.2);
\node[left] at (0,3) {\scriptsize $b_1$};
\draw[->] (2.5,1)--(3,1);

\begin{scope}[xshift=100]
\draw (1.3,3.2)--(2,2)--(2,0)--(1.3,1.2)--cycle;
\draw[fill=yellow!50] (0,1)--(1.3,1.2)--(2,0)--cycle;
\node at (0,1) {\scriptsize$\bullet$};
\node[above] at (0,1) {\scriptsize $b_2'$};
\draw[->] (2.5,1)--(3,1);
\end{scope}

\begin{scope}[xshift=170]
\draw[fill=yellow!50] (1.3,3.2)--(2,2)--(2,0)--(1.3,1.2)--cycle;
\node at (2,2) {\scriptsize$\bullet$};
\node[right] at (2,2) {\scriptsize $b_3'$};
\draw[->] (3,1)--(3.5,1);
\end{scope}

\begin{scope}[xshift=270]
\draw [very thick] (1.3,3.2)--(1.3,1.2);
\draw (1.3,1.2)--(2,0);
\node at (1.3,3.2) {\scriptsize$\bullet$};
\node[left] at (1.3,3.2) {\scriptsize $b_4'$};
\draw[->] (2.5,1)--(3,1);
\end{scope}

\begin{scope}[xshift=350]
\draw[very thick] (1.3,1.2)--(2,0);
\node at (1.3,1.2) {\scriptsize$\bullet$};
\node[above] at (1.3,1.2) {\scriptsize $b_5'$};
\draw[->] (2.5,1)--(3,1);
\end{scope}

\begin{scope}[xshift=420]
\node at (2,0) {\scriptsize$\bullet$};
\node[above] at (2,0) {\scriptsize $b_6'$};
\end{scope}

\end{scope}

\end{tikzpicture}
\caption{Two retraction sequences.}
 \label{fig_ret_of_prism}
 \end{figure} 


Next, we recall briefly the construction of \emph{toric orbifolds}. 
 A \emph{toric orbifold} is 
constructed from a combinatorial pair $(Q, \lambda)$, called an 
\emph{$\mathcal{R}$-characteristic pair}, of an $n$-dimensional simple 
convex polytope $Q$ and a function 
$$\lambda \colon \mathcal{F}(Q)=\{F_1, \dots , F_m\} \to \ZZ^n,$$
where $\mathcal{F}(Q)$ is the set of codimension one faces of $Q$, called 
\emph{facets}, and $\lambda$ satisfies the following condition:
\begin{equation}\label{eq_R-char_function_condition}
\{\lambda(F_{i_1}), \dots, \lambda(F_{i_k})\} 
\text{ is linearly independent, whenever } \bigcap_{j=1}^k F_{i_j}\neq \emptyset. 
\end{equation}
We call such a function $\lambda$ an \emph{$\mathcal{R}$-characteristic function} 
on $Q$. For the notational convenience, we sometimes denote 
$\lambda_i=\lambda(F_i)$ and call it an 
\emph{$\mathcal{R}$-characteristic vector} assigned to the facet $F_i$. 
 
\begin{remark}\label{rmk_simple_lattice_polytope}
One example of such a function can be provided by a 
\emph{simple lattice polytope} which is a simple polytope obtained by 
the convex hull of finitely many lattice points in $\Z^n\subset \R^n$. 
One can naturally assign each facet of simple lattice polytope 
its primitive normal vector as an $\mathcal{R}$-characteristic vector. 
Such a polytope is one of the key combinatorial objects in toric geometry,
as we shall explain briefly in the next subsection.
\end{remark}

We may regard $\ZZ^n$, the target space of $\lambda$, as the 
$\ZZ$-submodule of the Lie algebra of $T^n$. 
Let $E(x)  = F_{i_1} \cap \cdots \cap F_{i_k}$ denote the face with $x$ 
in its (relative) interior, and $T_{E(x)} \in T^n$ the subtorus 
generated by the images of $\lambda(F_{i_1}), \dots, \lambda(F_{i_k})$
under the map $ {\rm span}_\ZZ \{\lambda_{i_1},\dots, \lambda_{i_k}\} \hookrightarrow
 \ZZ^n \xrightarrow{\exp} T^n$.
Then, given an $\mc{R}$-characteristic pair $(Q,\lambda)$, we construct the following
quotient space
\begin{equation}\label{eq_def_of_toric_orb}
X(Q, \lambda):= Q\times T^n / \sim_\lambda,
\end{equation}
where $(x, t)\sim_\lambda (y, s)$ if and only if $x=y$ and $t^{-1}s \in T_{E(x)}$.

Now $X(Q, \lambda)$ has an orbifold structure 
induced by the group operation as described in \cite[Section 2]{PS}. The 
natural $T^n$-action, given by the multiplication on the second 
factor, induces  the orbit map 
\begin{equation}\label{eq_orbit_map}
\pi: X(Q, \lambda) \to Q  
\end{equation}
defined by  the projection onto the first factor. 
The space $X(Q, \lambda)$ is 
called the \emph{toric orbifold} associated to an $\mathcal{R}$-characteristic 
pair $(Q, \lambda)$. 
We remark that the authors of \cite{PS} gave an axiomatic definition 
 of \quasitoric orbifolds, which generalizes the definition 
 of toric manifolds of \cite[Section 1]{DJ}. 

Notice that the preimage $\pi^{-1}(F_{i})$ of a facet $F_i$ is a 
codimension 2 subspace fixed by a circle subgroup of $T^n$ 
generated by $\lambda_{i}$. Moreover, the 
preimage $\pi^{-1}(E)$ of a face $E=F_{i_1}\cap  \dots \cap  F_{i_k}$ 
is the intersection of $\pi^{-1}(F_{i_1}), \dots, \pi^{-1}(F_{i_k})$, 
which is again $T^n$-invariant. Indeed, it is shown in \cite[Section 2.3]{PS}
that $\pi^{-1}(E)$ is also a toric orbifold for each face $E$ of $Q$, 
whose $\mathcal{R}$-characteristic pair is described below.

As above, let $E=F_{i_1}\cap  \dots \cap  F_{i_k}$ be a face of codimension $k$ in 
$Q$, where $F_{i_1}, \dots, F_{i_k}$ are facets.  One can define a 
natural projection 
\begin{equation}\label{eq_def_of_rho_E}
\rho_E \colon \Z^n \to \Z^n / (({\rm span}_{\Z}\{\lambda_{i_1}, \dots, 
\lambda_{i_k}\}\otimes_\Z \R) \cap \Z^n),
\end{equation}
where $({\rm span}_\ZZ\{\lambda_{i_1}, \dots, \lambda_{i_k}\}\otimes_\Z \R) 
\cap \Z^n$ is a free $\Z$-module of rank $k$. 
We regard $E$ as an independent simple 
polytope whose facets $\mathcal{F}(E)$ are of the form:
$$\mathcal{F}(E) = \{ E\cap F_j \mid F_j \in \mathcal{F}(Q)~ 
\text{and}~ j \notin \{ i_1, \dots, i_k\}, ~\text{and}~ E\cap F_j\neq 
\varnothing\}.$$

Now, the composition of the maps $\rho_E$ and $\lambda$ yields 
an $\mathcal{R}$-characteristic function for $E$ 
\begin{equation}\label{eq_def_of_induced_char_ftn}
\lambda_E \colon \mathcal{F}(E) \to \Z^{n-k},
\end{equation}
defined for $\lambda_E(E\cap F_j)$ to be 
the primitive vector of $(\rho_E\circ \lambda)(F_j)$.  
Indeed, condition \eqref{eq_R-char_function_condition}  for the 
$\mathcal{R}$-characteristic function $\lambda_E$ follows naturally from $\lambda$. 

Next, we 
define certain integers associated to each vertex.  
Let $v$ be a vertex of $Q$ and $E$ a face of codimension $k$ 
containing $v$. Then, 
there are facets $F_{j_1}, \ldots, F_{j_{n-k}}$
such that $v= (E \cap F_{j_1}) \cap \cdots \cap (E \cap F_{j_{n-k}}).$ 
We define
\begin{equation}\label{eq_g_E(v)}
g_E(v) := \big| \det\left[ 
\lambda_E(E \cap F_{j_1})^t, \ldots, \lambda_E(E \cap F_{j_{n-k}})^t
\right] \big|,
\end{equation}
where $t$ denotes the transpose. 
In particular, when $E=Q$ and $v=F_{i_1}\cap \cdots \cap F_{i_n}$, 
the number $g_Q(v)$ is
\begin{equation}\label{eq_g_Q(v)}
g_Q(v)=\big| \det  \left[ \lambda(F_{i_1})^t \cdots \lambda(F_{i_n})^t \right] \big|.
\end{equation}
Given an $\mathcal{R}$-characteristic pair $(Q, \lambda)$ and 
each term $(B_i, E_i, b_i)$ of a retraction sequence of $Q$,
we denote naturally $g_{B_i}(v):=g_{E_i}(v)$ for each free vertex $v$ of $B_i$,
which makes the statement of Theorem \ref{thm_gcd_con} simpler. 

\begin{remark}
The number $g_E(v)$ encodes the singularity of $X(E, \lambda_E)$. 
Indeed, the local group
of the orbifold chart around the point $\pi_E^{-1}(v)$, where 
$\pi_E \colon X(E, \lambda_E) \to E$ is the orbit map, has the order 
$g_E(v)$. 
\end{remark}

The next theorem gives a sufficient condition ensuring that the cohomology 
of a toric orbifold is concentrated in even degree and torsion free. 

\begin{theorem}{\cite[Theorem 1.1]{BSS}}\label{thm_gcd_con}
Given any toric orbifold $X(Q,\lambda)$ over an admissible simple polytope, assume that
$$\gcd \big\{ g_{B_i}(v) \mid v ~\text{is a free vertex in}~B_i \big\}=1,$$
 for each $B_i$ which appears in a retraction sequence of $Q$ with $\dim B_i > 1$;
 then $H^\ast(X(Q, \lambda) ;\ZZ)$ is torsion free and concentrated in even degrees. 
\end{theorem}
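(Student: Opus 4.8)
The plan is to deduce Theorem~\ref{thm_gcd_con} from Theorem~\ref{thm_even_cells_no_torsion} by exhibiting, for each prime $p$, a building sequence on $X(Q,\lambda)$ whose attached $\mathbf{q}$-cells are all even-dimensional and whose structure groups have order coprime to $p$. The bridge between the combinatorics of $Q$ and the topology of the orbifold is the orbit map $\pi\colon X(Q,\lambda)\to Q$ of \eqref{eq_orbit_map}: since $\pi^{-1}(E)$ is itself a toric orbifold of real dimension $2\dim E$ for every face $E$, a retraction sequence of $Q$ should pull back to a filtration of $X(Q,\lambda)$ by $\mathbf{q}$-CW subcomplexes.

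First I would make this precise; this is the content of the forward-referenced Proposition~\ref{ret_build}. Given a retraction sequence $\{(B_k,E_k,b_k)\}_{k=1}^{\ell}$, set $Y_k=\pi^{-1}(B_k)$. Passing from $B_{k-1}$ to $B_k$ deletes the free vertex $b_{k-1}$, whose neighborhood in $B_{k-1}$ is a corner $\RR^{d}_{\geq 0}$ with $d=\dim E_{k-1}$; its $\pi$-preimage is an orbifold chart modeled on $\overline{D}^{2d}/G$, where $G$ is the local group at $\pi^{-1}(b_{k-1})$. By the description following \eqref{eq_g_E(v)} this local group has order $g_{E_{k-1}}(b_{k-1})=g_{B_{k-1}}(b_{k-1})$. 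Thus $Y_{k-1}$ is recovered from $Y_k$ by attaching the even-dimensional $\mathbf{q}$-cell $\overline{e}^{2d}/G$ along its boundary orbifold lens space, with $\pi^{-1}(b_{k-1})$ as special point; reversing the order of the retraction sequence then realizes $X(Q,\lambda)$ as a $\mathbf{q}$-CW complex with no odd-dimensional $\mathbf{q}$-cells. I expect this identification to be the main obstacle: one must check that the attaching map is genuinely a map of orbifold lens spaces, that the special point is free at each stage, and that the local group order is exactly $g_{E_{k-1}}(b_{k-1})$, which rests on the local analysis of $\pi$ near a free vertex carried out in \cite{PS,BSS}.

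With the $\mathbf{q}$-CW structure in hand the hypothesis becomes purely combinatorial. Fix a prime $p$. When $\dim E_k\leq 1$ the cell $\overline{e}^{2\dim E_k}/G$ is an ordinary disc of dimension $0$ or $2$, so its structure group may be taken trivial and $p$ divides no such order automatically. For a stage with $\dim B_k>1$, the assumption $\gcd\{g_{B_k}(v)\mid v\text{ free in }B_k\}=1$ forces the existence of a free vertex $v$ with $p\nmid g_{B_k}(v)$; choosing such a vertex at every admissible stage produces a (possibly $p$-dependent) retraction sequence, hence by the previous paragraph a building sequence $\{(Y_i,0_i)\}_{i=1}^{\ell}$ for which $\gcd\{p,|G_i|\}=1$ for all $i$. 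Admissibility of $Q$ is precisely what allows this greedy choice to be completed to a full sequence.

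Finally, since such a building sequence exists for every prime $p$, Theorem~\ref{thm_even_cells_no_torsion} applies and gives that $H_{\ast}(X(Q,\lambda);\ZZ)$ is torsion free with $H_{\mathrm{odd}}(X(Q,\lambda);\ZZ)=0$. Because the integral homology is free and concentrated in even degrees, the universal coefficient theorem transports both properties to cohomology, so $H^{\ast}(X(Q,\lambda);\ZZ)$ is torsion free and concentrated in even degrees, as claimed.
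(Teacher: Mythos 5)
Your argument is correct, but it is worth being precise about what it is a proof \emph{of} and how it relates to the paper. The paper itself does not prove Theorem~\ref{thm_gcd_con}: it is quoted verbatim from \cite[Theorem 1.1]{BSS}, where the proof works integrally with the retraction filtration, using the gcd condition over \emph{all} free vertices of each $B_i$ simultaneously in a single long-exact-sequence argument. What you have written is instead a rederivation of the statement from the present paper's machinery, and it coincides almost step for step with the route the paper lays out elsewhere: your first paragraph is Proposition~\ref{ret_build} (including the identification $|G_i|=g_{E_i}(b_i)$ noted just after its proof), your greedy prime-by-prime selection of free vertices is exactly the proof of Proposition~\ref{prop_new_theorem_is_better}, and the final application of Theorem~\ref{thm_even_cells_no_torsion} plus universal coefficients is Theorem~\ref{no-tor-on-toric}. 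There is no circularity in doing it this way, since Theorem~\ref{thm_even_cells_no_torsion} is proved independently; and the small points you flag are handled correctly (admissibility is what lets the greedy partial sequence be completed, and the stages with $\dim E_k\leq 1$ give genuine cells of dimension $0$ or $2$ whose structure group may be taken trivial by Remark~2.3(4), so the coprimality condition is vacuous there). So: correct, and essentially the paper's own approach as distributed over Propositions~\ref{ret_build} and \ref{prop_new_theorem_is_better} and Theorem~\ref{no-tor-on-toric}, though it is a genuinely different proof from the one in \cite{BSS} that the theorem's citation points to; the prime-by-prime argument is also what makes the weaker hypothesis of Theorem~\ref{no-tor-on-toric} accessible, as Example~\ref{ex_converse_not_true} shows.
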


We remark that \cite[Theorem 1.1]{BSS} was proved for toric orbifolds over
admissible simple polytopes, but it was not mentioned explicitly.
Retraction sequences and building sequences are related by the following. 

\begin{prop}\label{ret_build}
If $\{(B_i, E_i, b_i)\}_{i=1}^\ell$ is a retraction sequence of an $n$-dimensional simple 
polytope $Q$ and $(Q, \lambda)$ is an $\mathcal{R}$-characteristic pair, then
it induces a building sequence $\{(Y_{i}, 0_{i})\}_{i=1}^{\ell}$
of $X(Q, \lambda)$. 
\end{prop}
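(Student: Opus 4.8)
The plan is to realize the building sequence as the preimage filtration of the retraction sequence under the orbit map $\pi \colon X(Q,\lambda)\to Q$, read in reverse order so that we \emph{build up} rather than retract. Concretely I would set $Y_i := \pi^{-1}(B_{\ell-i+1})$ for $1\le i\le \ell$, together with the candidate special point $0_i := \pi^{-1}(b_{k})$ where $k=\ell-i+1$. Then $Y_1=\pi^{-1}(B_\ell)=\pi^{-1}(b_\ell)$ is a single point (the whole torus collapses over a vertex), serving as the $0$-dimensional $\mathbf{q}$-cell, while $Y_\ell=\pi^{-1}(B_1)=\pi^{-1}(Q)=X(Q,\lambda)$. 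With this indexing the proposition reduces to showing that each inclusion $Y_{i-1}\hookrightarrow Y_i$ is the attachment of a single, even-dimensional $\mathbf{q}$-cell whose origin is $0_i$, and that $0_i$ is free in $Y_i$; since $Q$ has $\ell$ vertices this produces exactly $\ell$ cells.

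First I would analyze one step at the level of polytopes. Writing $k=\ell-i+1$ and $d_k=\dim E_k$, the recursion \eqref{eq_B_k_in_retraction_seq} exhibits $B_{k+1}$ as $B_k$ with the open star of the free vertex $b_k$ deleted. Because $b_k$ is free, a neighborhood of $b_k$ in $B_k$ is homeomorphic to $\mathbb{R}^{d_k}_{\geq 0}$, so every face of $B_k$ through $b_k$ lies in $E_k$; as $E_k$ itself is such a face, the union of the closed star equals $E_k$. Hence $B_k=B_{k+1}\cup E_k$ with $B_{k+1}\cap E_k=C_k$, where $C_k$ is the union of the faces of $E_k$ \emph{not} containing $b_k$. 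Since $E_k\cong \bar D^{d_k}$ is a simple polytope and $b_k$ a vertex, $C_k=\partial E_k\setminus \operatorname{ost}(b_k)\cong \bar D^{d_k-1}$, and $E_k\setminus\{b_k\}$ deformation retracts radially onto $C_k$.

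Next I would pass to the orbifold. Because $\pi^{-1}(E_k)=X(E_k,\lambda_{E_k})$ is itself a toric orbifold of real dimension $2d_k$ and $\pi^{-1}(C_k)\subseteq \pi^{-1}(B_{k+1})=Y_{i-1}$, it suffices to identify $\pi^{-1}(E_k)$, relative to $\pi^{-1}(C_k)$, with a $\mathbf{q}$-cell glued to $Y_{i-1}$. For the local model I would invoke the orbifold chart around a vertex point from \cite[Section 2]{PS}: a corner neighborhood $U_{b_k}\cong \mathbb{R}^{d_k}_{\geq 0}$ of $b_k$ has preimage $\pi^{-1}(U_{b_k})$ homeomorphic to $\bar D^{2d_k}/G_k$, where $G_k$ is the finite local group acting linearly, of order $g_{E_k}(b_k)$. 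This exhibits a neighborhood of $0_i$ as an even-dimensional $\mathbf{q}$-disc, so $0_i$ is free in $Y_i$. To conclude I would lift the polytope retraction of $E_k\setminus\{b_k\}$ onto $C_k$ to a deformation retraction of $\pi^{-1}(E_k)\setminus\{0_i\}$ onto $\pi^{-1}(C_k)$; then the freeness criterion and the induced cofibration recorded in the Remark following the definition of $\mathbf{q}$-CW complexes identify $Y_i$ as $Y_{i-1}$ with the $\mathbf{q}$-cell $\bar e^{2d_k}/G_k$ attached along a map $\phi_i\colon S^{2d_k-1}/G_k\to Y_{i-1}$. Inductively each $Y_{i-1}$ is then a genuine $\mathbf{q}$-CW subcomplex, and $\{(Y_i,0_i)\}_{i=1}^\ell$ is a building sequence.

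I expect the real work to lie in the last step: promoting the retraction on the base to an honest deformation retraction of the orbifold. The difficulty is that as a point of $E_k$ is pushed from the interior toward $C_k$ it moves between faces whose associated stabilizer subtori $T_{E(x)}$ vary, so the retracting homotopy must be arranged to be compatible with these torus collapses in order to descend continuously through the identification $\sim_\lambda$; this face-preserving construction is the analogue of the arguments already used in \cite{PS} and \cite{BSS}, and is where care is required. The remaining ingredients — that $C_k\cong \bar D^{d_k-1}$, that the vertex chart is $\bar D^{2d_k}/G_k$ with $|G_k|=g_{E_k}(b_k)$, and the bookkeeping of the reversed indexing — are routine once the cited local models are in hand.
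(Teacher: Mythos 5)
Your proposal follows essentially the same route as the paper: since $B_k=\bigsqcup_{j\geq k}U_j$ with $U_j$ the open star of $b_j$ in $E_j$, your $Y_i=\pi^{-1}(B_{\ell-i+1})$ agrees with the paper's $Y_i=\bigcup_{j\geq \ell-i+1}\pi^{-1}(U_j)$, and both arguments identify $\pi^{-1}(U_j)\cong D^{2\dim E_j}/G_j$ via the orbifold chart of \cite[Subsection 2.1]{PS} together with $\pi^{-1}(E_j)\cong X(E_j,\lambda_{E_j})$. The extra care you take with the attaching maps and the lift of the radial retraction of $E_k\setminus\{b_k\}$ onto $C_k$ is a more explicit rendering of a step the paper leaves implicit, not a divergence in method.
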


\begin{proof}
Let $E_i$ be a face of $Q$ with $\dim E_i=n-k$,  and $U_i$ the open subset of 
$E_i$ obtained by deleting the faces of $E_i$ not containing the vertex $b_i$.
Observe that $b_i=\bigcap_{j=1}^{n-k} (E_i \cap F_{i_j})$ for some facets $F_{i_1}, \ldots,
F_{i_{n-k}}$ of $Q$ and $U_i$ is homeomorphic to $\RR^{n-k}_{\geq 0}$ as manifold
with corners. Note that $\pi^{-1}(E_i)$ is a toric orbifold and homeomorphic to
$X(E_i, \lambda_{E_i})$, see \cite[Proposition 3.3]{BSS}. So, we can apply the 
orbifold chart construction  on a toric orbifold described in \cite[Subsection 2.1]{PS} 
to get that $\pi^{-1}(U_i)$ is homeomorphic to the quotient of an open disc
$D^{2(n-k)}$ by a finite group 
$G_i$ which is given by  $$G_{i} = \ZZ^{n-k} / \text{span}_\ZZ
\{ \lambda_{E_i}(E_i\cap F_{i_1}), \ldots, \lambda_{E_i}(E_i\cap F_{i_{n-k}}) \}.$$
Since $Q = \bigcup_{i=1}^{\ell} U_i$, we get a $\mathbf{q}$-CW complex structure
on $X(Q, \lambda)$ with $\mathbf{q}$-cells $\pi^{-1}(U_i)$ for $i=1, \ldots, \ell$.
Setting $Y_i = \bigcup_{j \geq \ell -i+1} (\pi^{-1}(U_j))$ gives a building sequence
 $\{(Y_i, 0_i)\}_{i=1}^{\ell}$ where $0_i = \pi^{-1}(b_i)$ for $i=1, \ldots, \ell$. 
\end{proof}
We note that the order $|G_i |$ is exactly $g_{E_i}(b_i)$
defined as in \eqref{eq_g_E(v)}.

\begin{prop}\label{prop_new_theorem_is_better}
If a toric orbifold $X(Q, \lambda)$ satisfies the assumption of Theorem
 \ref{thm_gcd_con}, then it satisfies the assumption of Theorem \ref{thm_even_cells_no_torsion}.
\end{prop}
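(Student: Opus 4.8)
The plan is to prove the implication one prime at a time, using a greedy choice of free vertices. The two ingredients already in place are Proposition~\ref{ret_build}, which converts a retraction sequence $\{(B_i,E_i,b_i)\}_{i=1}^{\ell}$ of $Q$ into a building sequence $\{(Y_i,0_i)\}_{i=1}^{\ell}$ of $X(Q,\lambda)$ with $|G_i| = g_{E_i}(b_i) = g_{B_i}(b_i)$, and the fact that the $\mathbf{q}$-cells produced there are the even-dimensional discs $\pi^{-1}(U_i) = D^{2(n-k)}/G_i$. In particular $X(Q,\lambda)$ is a $\mathbf{q}$-CW complex with no odd-dimensional $\mathbf{q}$-cells, so only the coprimality part of the hypothesis of Theorem~\ref{thm_even_cells_no_torsion} needs to be arranged.

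Fix a prime $p$. First I would build a retraction sequence greedily, starting from $B_1 = Q$. Suppose $B_i$ has been produced. If $\dim B_i > 1$, the hypothesis of Theorem~\ref{thm_gcd_con} gives $\gcd\{g_{B_i}(v) \mid v \text{ free in } B_i\} = 1$, so $p$ cannot divide every $g_{B_i}(v)$; hence there is a free vertex $b_i$ with $p \nmid g_{B_i}(b_i)$, which I select. If instead $\dim B_i \leq 1$, then every free vertex lies in a face $E_i$ with $\dim E_i \leq 1$, so the corresponding $\mathbf{q}$-cell has real dimension at most $2$ and $G_i$ may be taken trivial by the fourth item of the Remark in Section~\ref{sec_q-CW_complex}; here $g_{B_i}(b_i) = |G_i| = 1$ and any free vertex will do. Admissibility of $Q$ guarantees that a free vertex exists at every stage, so the greedy selection never stalls and terminates in a genuine retraction sequence $\{(B_i,E_i,b_i)\}_{i=1}^{\ell}$.

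Applying Proposition~\ref{ret_build} to this sequence yields a building sequence $\{(Y_i,0_i)\}_{i=1}^{\ell}$ with $|G_i| = g_{B_i}(b_i)$, and by construction $p \nmid g_{B_i}(b_i)$ for every $i$, so $\gcd\{p, |G_i|\} = 1$ throughout. Since $p$ was arbitrary, for every prime there is such a building sequence, which is precisely the hypothesis of Theorem~\ref{thm_even_cells_no_torsion}.

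The step I expect to require the most care is the dimension bookkeeping. Theorem~\ref{thm_gcd_con} imposes its coprimality condition only on the stages with $\dim B_i > 1$, so the argument must confirm that the omitted low-dimensional stages are harmless; this is exactly where the fourth item of the Remark in Section~\ref{sec_q-CW_complex} enters, since a $\mathbf{q}$-cell of real dimension $0$ or $2$ has trivial group and thus $|G_i| = 1$ automatically, independently of $p$. The only other point to verify is that each $B_i$ arising in the greedy construction genuinely appears in some retraction sequence of $Q$, so that the hypothesis of Theorem~\ref{thm_gcd_con} applies to it; this holds because the sequence being assembled is itself a valid retraction sequence of the admissible polytope $Q$.
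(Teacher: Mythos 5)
Your proof is correct and follows essentially the same route as the paper's: fix a prime $p$, greedily build a retraction sequence by using the gcd hypothesis of Theorem~\ref{thm_gcd_con} at each stage to pick a free vertex $b_i$ with $p \nmid g_{B_i}(b_i)$, and then invoke Proposition~\ref{ret_build}. Your explicit handling of the stages with $\dim B_i \leq 1$ (where $g_{E_i}(b_i)=1$ automatically) is a detail the paper's proof leaves implicit, and it is a welcome addition.
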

\begin{proof}
Let $X(Q, \lambda)$ be a toric orbifold over an $n$-dimensional admissible
simple polytope $Q$ with $\ell$ vertices, and $p$ a prime. Since $B_1=Q$, we have 
$$\gcd\{g_Q(v) \mid  v \in V(Q)\}=1$$ 
and so there is a $b_1 \in V(Q) $ such that $\gcd\{p, g_Q(b_1)\}=1$.
We consider $(B_1, E_1, b_1) = (Q, Q, b_1)$ for the first step of the retraction sequence.
Since $Q$ is simple, $B_2$ has $n$ many free vertices, say $\{v_{i_1}, \ldots, v_{i_n}\}$,
namely, the ones at the \emph{ends} of the $n$ deleted edges joining $b_1$.
The assumption of Theorem \ref{thm_gcd_con} implies that  
$$\gcd\{g_{B_2}(v_{i_1}), \ldots, g_{B_2}(v_{i_n}) \} = 1,$$ 
which guarantees that there is a vertex $b_2 \in \{v_{i_1}, \ldots, v_{i_n}\}$
with $\gcd\{p, g_{B_2}(b_2)\}=1$. Let $E_2$ be the face of $B_1=Q$ 
determined by the edges of $B_2$ which intersect $b_2$. 
So $(B_2, E_2, b_2)$ can be a second term of a retraction sequence.
Since $Q$ is an admissible simple polytope, then $B_3$ has at least one free vertex. 
Continuing this process, since the number of vertices $\ell=|V(Q)|$ is finite, 
one get a retraction $\{(B_i, E_i, b_i)\}_{i=1}^\ell$
of $Q$ such that $\gcd\{p, g_{E_i}(b_i)\}=1$ for $i=1, \ldots, \ell$. Therefore
the result follows from Proposition \ref{ret_build} and its proof. 
\end{proof}

The next theorem is the direct application of Theorem \ref{thm_even_cells_no_torsion} to 
toric orbifolds. 

\begin{theorem}\label{no-tor-on-toric}
Let $X(Q, \lambda)$ be a toric orbifold. If, for each prime $p$, there is a
retraction sequence $\{(B_i, E_i, v_i)\}_{i=1}^\ell$ of $Q$ such that $\gcd\{p, g_{E_i}(v_i)\}=1$,
then $H^\ast(X(Q, \lambda);\ZZ) $ is torsion free and concentrated in even degrees. 
\end{theorem}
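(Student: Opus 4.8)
The plan is to reduce Theorem \ref{no-tor-on-toric} directly to Theorem \ref{thm_even_cells_no_torsion} via the translation between retraction sequences and building sequences. The essential observation is that all of the hard analytic and homological work has already been done: Proposition \ref{ret_build} shows that any retraction sequence $\{(B_i, E_i, v_i)\}_{i=1}^\ell$ of $Q$ induces a building sequence $\{(Y_i, 0_i)\}_{i=1}^\ell$ for $X(Q,\lambda)$, and the remark immediately following that proposition identifies the order of the attaching group precisely as $|G_i| = g_{E_i}(v_i)$. So the proof is mostly a matter of checking that the hypotheses line up.

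First I would verify that $X(Q,\lambda)$ is a finite $\mathbf{q}$-CW complex with \emph{no odd-dimensional} $\mathbf{q}$-cells. This is immediate from the construction in Proposition \ref{ret_build}: each $\mathbf{q}$-cell is $\pi^{-1}(U_i)$, homeomorphic to $D^{2(n-k)}/G_i$, which is even-dimensional since $\dim_{\RR} \pi^{-1}(E_i) = 2\dim E_i$. Thus the parity hypothesis of Theorem \ref{thm_even_cells_no_torsion} is automatic for toric orbifolds, and every $\mathbf{q}$-cell has the form $e^{2k_i}/G_i$ as required.

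Next I would translate the gcd hypothesis. For each prime $p$, the theorem gives a retraction sequence $\{(B_i, E_i, v_i)\}_{i=1}^\ell$ with $\gcd\{p, g_{E_i}(v_i)\} = 1$ for every $i$. Applying Proposition \ref{ret_build} to this particular sequence yields a building sequence $\{(Y_i, 0_i)\}_{i=1}^\ell$ whose $i$-th attaching group $G_i$ satisfies $|G_i| = g_{E_i}(v_i)$. Hence $\gcd\{p, |G_i|\} = \gcd\{p, g_{E_i}(v_i)\} = 1$ for all $i$, which is exactly the hypothesis of Theorem \ref{thm_even_cells_no_torsion} for the prime $p$. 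Since such a building sequence exists for \emph{every} prime $p$, Theorem \ref{thm_even_cells_no_torsion} applies and gives that $H_\ast(X(Q,\lambda); \ZZ)$ is torsion free with $H_{\text{odd}}(X(Q,\lambda);\ZZ)$ trivial. I would then pass to cohomology via the universal coefficient theorem: torsion-freeness of homology forces $H^\ast(X(Q,\lambda);\ZZ)$ to be torsion free, and concentration of homology in even degrees forces the same for cohomology, yielding the stated conclusion.

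I do not expect a serious obstacle here, since the theorem is explicitly described in the surrounding text as ``the direct application of Theorem \ref{thm_even_cells_no_torsion} to toric orbifolds.'' The only point demanding a little care is that Theorem \ref{thm_even_cells_no_torsion} requires a building sequence for each prime separately, and one must confirm that the per-prime choice of retraction sequence — whose existence is exactly what the hypothesis of Theorem \ref{no-tor-on-toric} postulates — feeds correctly through Proposition \ref{ret_build} for each $p$ independently. The mild subtlety worth a sentence is the indexing convention in Proposition \ref{ret_build}, where $Y_i = \bigcup_{j \geq \ell - i + 1} \pi^{-1}(U_j)$ reverses the order of the retraction sequence; I would note that this reversal is harmless because the gcd condition is imposed uniformly over all $i$, so the relabeling does not disturb the hypothesis $\gcd\{p, |G_i|\} = 1$.
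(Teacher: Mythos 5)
Your proposal is correct and follows exactly the paper's own route: the paper's proof is the one-line "follows from Proposition \ref{ret_build} and Theorem \ref{thm_even_cells_no_torsion}," and you have simply spelled out the same reduction (even-dimensionality of the cells, the identification $|G_i| = g_{E_i}(v_i)$, the per-prime application, and the passage to cohomology via universal coefficients). No discrepancies to report.
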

\begin{proof}
The proof follows from Proposition \ref{ret_build} and Theorem \ref{thm_even_cells_no_torsion}.
\end{proof}

The next example illustrates the fact that the converse of 
Proposition \ref{prop_new_theorem_is_better} is not true in general. 
Hence, Theorem \ref{no-tor-on-toric} generalizes Theorem \ref{thm_gcd_con}.

\begin{example}\label{ex_converse_not_true}
In this example we construct a toric orbifold which satisfies the condition
of Theorem \ref{no-tor-on-toric}, but fails to satisfy the hypothesis of
Theorem \ref{thm_gcd_con}. Consider the $\mathcal{R}$-characteristic pair
$(Q, \lambda)$ illustrated in Figure \ref{fig-eg1}. 

\begin{figure}[ht]
\begin{tikzpicture}
\draw[thick] (0,0)--(3,0)--(3,3)--(0,3)--cycle;
\draw[thick] (4,4)--(4,1)--(3,0);
\draw[thick, dashed] (1,1)--(0,0); \draw[thick, dashed] (1,1)--(1,4); \draw[thick, dashed](1,1)--(4,1);
\node[left] at (0,3) {\footnotesize $v_{145}$};
\node[below right] at (3,3) {\footnotesize $v_{125}$};
\node[right] at (4,4) {\footnotesize $v_{235}$};
\node[above] at (1,4) {\footnotesize $v_{345}$};
\node[left] at (0,0) {\footnotesize $v_{146}$};
\node[right] at (3,0) {\footnotesize $v_{126}$};
\node[right] at (4,1) {\footnotesize $v_{236}$};
\node[above left] at (1,0.9) {\footnotesize $v_{346}$};

\node[fill=white] at (1.5,1.8) {\footnotesize $\lambda(F_1)=(0,1,2)$};
\node[fill=white] at (2,3.5) {\footnotesize $\lambda(F_5)=(1,0,0)$};
\draw[thick] (0,3)--(1,4)--(4,4)--(3,3);

\node[right] at (5,0) {\footnotesize $\lambda(F_6)=(2,1,0)$};
\draw [<-] (5,0) [out=210, in=300] to (2.2,0);
\draw[dotted] (2.2,0)--(2,0.5);

\node[right] at (5,1.5) {\footnotesize $\lambda(F_2)=(0,1,0)$};
\draw[<-] (5,1.5) -- (3.5,2);

\node[right] at (5,2.5) {\footnotesize $\lambda(F_3)=(0,0,1)$};
\draw [<-] (5,2.5) [out=140, in=0] to (4,3);
\draw[dotted] (4,3) [out=180, in=60] to (2.5,2.5);

\node[left] at (-1,2) {\footnotesize $\lambda(F_4)=(0,2,1)$};
\draw [<-] (-1,2) [out=40, in=180] to (0,2.5);
\draw [dotted] (0,2.5)--(0.5, 2.5);
\end{tikzpicture}
\caption{An $\mathcal{R}$-characteristic function on 3-cube $Q$.}
 \label{fig-eg1}
\end{figure}
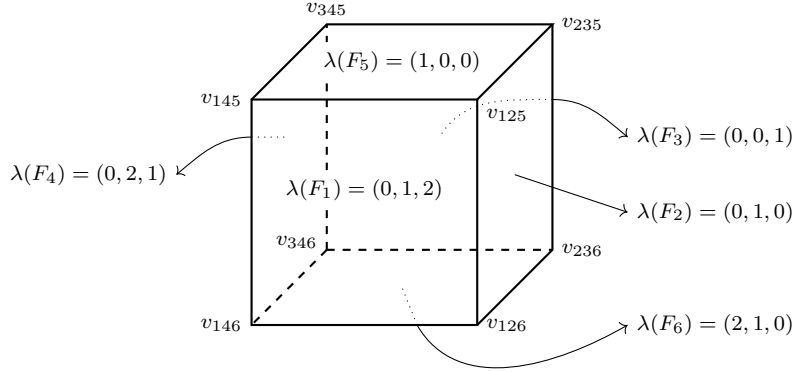

%
One can compute that 
\begin{align*}
g_Q(v_{125})=2,~ g_Q(v_{235})=1,~ g_Q(v_{345})=2,~g_Q(v_{145})=3,\\
g_Q(v_{126})=4,~ g_Q(v_{236})=2, ~g_Q(v_{346})=4,~g_Q(v_{146})=6.
\end{align*}
Now we consider two different retraction sequences of $Q$, 
as in  Figure \ref{fig_ret_of_cube}. The first retraction sequence is given by
\begin{align}\label{eq_ret_(a)}
\begin{split}
&(B_1, B_1, v_{235}) \to (B_2, F_6, v_{236})\to (B_3, F_4, v_{345}) \to (B_4, F_1, v_{125})\to \\
&(B_5, F_4\cap F_6, v_{346})\to (B_6, F_1 \cap F_4, v_{145})\to 
(B_7, F_1 \cap F_6, v_{126}) \to\\
& (B_8, v_{146}, v_{146}),
\end{split}
\end{align}
which is illustrated in Figure \ref{fig_ret_of_cube}-Retraction (a). 
In the second term of the sequence \eqref{eq_ret_(a)}, we compute
 $g_{F_6}(v_{236})$ as follows; since 
$$(\text{span}\{\lambda(F_6)\}\otimes_\ZZ \RR)\cap \ZZ^3 =\text{span}\{(2,1,0)\},$$
a choice of basis $\{(1,0,0), (2,1,0), (0,0,1)\}$ of $\ZZ^3$ 
induces the $\mathcal{R}$-characteristic function 
$$\lambda_{F_6} \colon \mathcal{F}(F_6)=\{F_i\cap F_6\mid i=1, 2,3,4\} 
\to \ZZ^3/\text{span}\{(2,1,0)\}\cong \ZZ^2,$$
on $F_6$  defined as in \eqref{eq_def_of_induced_char_ftn}  by 
$\lambda_{F_6}(F_1\cap F_6)=(-1,1)$, 
$\lambda_{F_6}(F_2\cap F_6)=(-1,0)$, 
$\lambda_{F_6}(F_3\cap F_6)=(0,1)$ and 
$\lambda_{F_6}(F_4\cap F_6)=(-4,1).$
In particular, 
$$g_{F_6}(v_{236})=\left| \det \begin{bmatrix} \lambda_{F_6}(F_2\cap F_6)^t  &
\lambda_{F_6}(F_3\cap F_6)^t \end{bmatrix} \right|
=\left| \det \begin{bmatrix} -1 & 0 \\ 0 & 1\end{bmatrix} \right| = 1.$$
Similarly, one can compute the integers defined in \eqref{eq_g_E(v)} with respect to 
the retraction sequence \eqref{eq_ret_(a)} as follows:
$$g_{F_4}(v_{345})=g_{F_1} (v_{125})=g_{F_4\cap F_6}(v_{346})=
g_{F_1 \cap F_4}(v_{145})=g_{F_1 \cap F_6}(v_{126})=1.$$
Hence, the $\mathcal{R}$-characteristic pair $(Q, \lambda)$ defined in 
Figure \ref{fig-eg1} satisfies the hypothesis of Theorem \ref{no-tor-on-toric}. 
\begin{figure}
        \centerline{
           \scalebox{.60}{
            \input{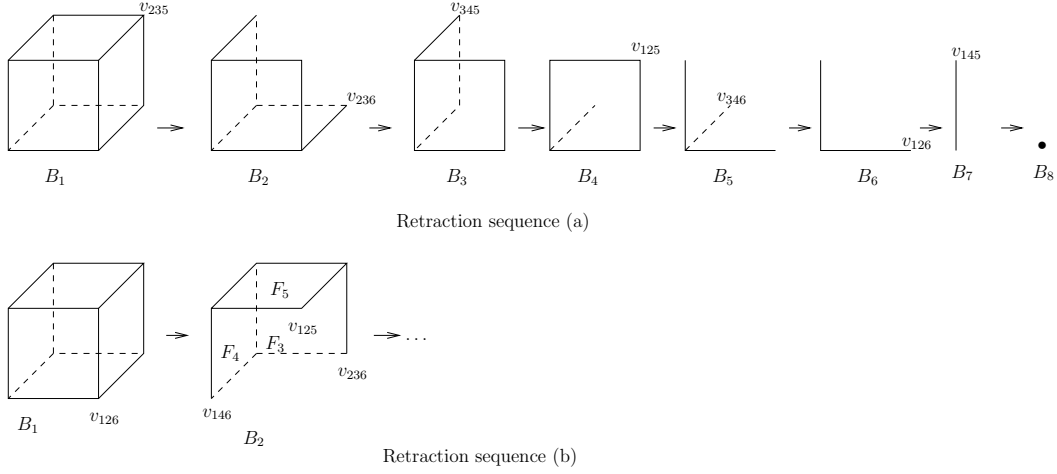}
            }
          }
 \caption{Two retraction sequences of $Q$.}
 \label{fig_ret_of_cube}
 \end{figure} 
 
Next, we consider a retraction sequence whose first choice of free vertex
is $v_{126}$, see Figure \ref{fig_ret_of_cube}-Retraction (b). The vertices
$v_{125}, v_{146}, v_{236}$ are free vertices in $B_2$. Similarly
as above, one can compute that 
$g_{F_4}(v_{146})=2,~g_{F_5} (v_{125})=2,$ and $g_{F_3}(v_{236})=2.$
Hence,  
$$\gcd\{g_{F_4}(v_{146}), g_{F_5} (v_{125}), g_{F_3}(v_{236})\} =2,$$
which fails to satisfy the assumption of Theorem \ref{thm_gcd_con} for $B_2$. 
\end{example}


%
%

\subsection{Toric varieties}\label{toric_varieties}
Toric varieties are important objects in algebraic geometry which have been
studied from the beginning of nineteen-seventies. 
They featured prominently in early investigations of the phenomenon of mirror symmetry. 
One approach to constructing them begins with
a lattice $N(\cong \ZZ^n)$ and a \emph{fan} $\Sigma$  which is a finite collection of 
\emph{cones} in a vector space $N_\RR:=N \otimes_\ZZ \RR (\cong \RR^n)$
satisfying certain conditions, see \cite[Chapter 1--3]{CLS}, \cite[Chapter 5]{Ew}, 
\cite[Chapter 1]{Ful-ITV} and  \cite[Chapter 1]{Oda} for details.  
 
We begin by reviewing briefly the definition of cones and fans.  
A \emph{cone} is a positive hull of finitely many elements in $N_\RR$.
To be more precise, for a finite subset $S=\{v_1, \dots, v_k\}\subset N_\RR$, 
the cone of $S$ is a subset of $N_\RR$ of the form
$$\sigma:=\text{Cone}(S)=\left\{ \sum_{i=1}^k c_i v_i ~\Big|~ c_i\geq 0\right\}.$$
To each cone $\sigma=\text{Cone}(S)$, one can associate a \emph{dual cone} 
$\sigma^\vee \subset M_\RR$, where $M_\RR$ is a vector space 
dual to $N_\RR$,
defined as follows: 
$$\sigma^\vee= \{y \in M_\RR \mid  \left< y, v\right> \geq 0 \text{ for all } v\in \sigma\}.$$
One can easily see that $(\sigma^\vee)^\vee=\sigma$. 

A cone $\sigma_1:=\text{Cone}(S_1)$ is called a \emph{face} of another cone 
$\sigma_2:=\text{Cone}(S_2)$
if there is a vector $w\in \sigma_2^\vee$ and corresponding hyperplane 
$H_w:=\{ x\in N_\RR \mid \left<w, x \right> =0\}$ such that 
$\sigma_1= H_w \cap \sigma_2$ and $\sigma_2$ lie in 
the positive half space $H_w^+:=\{x\in N_\RR \mid \left<w, x\right> \geq 0\}$. 
Notice that $\sigma_2^\vee$ is 
a face of $\sigma_1^\vee$, if $\sigma_1$ is a face of $\sigma_2$.

The set of one dimensional face of a cone $\sigma$ is called the 
\emph{generating rays} of $\sigma$. 
A cone $\sigma$ is called \emph{strongly convex} if it does not 
contain a nontrivial linear subspace of $N_\RR$, and is called \emph{simplicial}
if its generating rays are linearly independent in $N_\RR$. 

Let $M$ be the lattice dual to $N$. The lattice points $S_\sigma:=\sigma^\vee\cap M$ 
form a finitely generated semigroup (see \cite[Proposition 1.2.17]{CLS}) 
which yields an algebraic variety 
$$U_\sigma:= \text{Spec}(\CC[S_\sigma]),$$
which is called the \emph{affine toric variety} associated to a cone $\sigma$.

\begin{remark}\label{rmk_torus_action_on_affine_toric}
Every affine toric variety $U_\sigma$ associated to a cone $\sigma\in N_\RR$ 
equips with the action of $(\CC^\ast)^n$, which is obtained by identifying
$(\CC^\ast)^n\cong \text{Spec}(\CC[M])$. Here we regard $M$ as the 
dual cone of $\{0\} \in N_\RR$. See \cite[Chapter 1]{CLS} or \cite[Section 1.3]{Ful-ITV}. 
\end{remark}

A \emph{fan} $\Sigma$ in $N_\RR$ is a finite collection of strongly convex 
cones such that
\begin{enumerate}
\item if $\sigma_2\in \Sigma$ and $\sigma_1$ is a face of $\sigma_2$, 
then $\sigma_1 \in \Sigma$;
\item if $\sigma_1, \sigma_2\in \Sigma$, then $\sigma_1 \cap \sigma_2$ is
a face of both $\sigma_1$ and $\sigma_2$. 
\end{enumerate}
One natural way of defining a fan in toric geometry is to use a full 
dimensional lattice polytope $Q \subset M_\RR$, see Remark \ref{rmk_simple_lattice_polytope}. 
To be more precise, let $\mathcal{F}(Q)=\{F_1, \dots, F_m\}$ be facets of $Q$ 
as in Subsection \ref{toric_orbifolds}.
For each $i$, let $\lambda_i\in N$ denote the vector such that 
\begin{equation}\label{eq_simple_polytope}
Q=\{ y\in M_\RR\mid \left< y, \lambda_i \right> + a_i \geq 0 \text{ for some}
 ~ a_i\in \ZZ~ \mbox{with}~ i=1, \dots, m \}.
\end{equation}
Observe that $\lambda_i$ in \eqref{eq_simple_polytope} represents the inward
normal vector of the facet 
$$F_i =\{ y\in M_\RR\mid \left< y, \lambda_i \right> + a_i = 0\} \cap Q$$
for $i=1, \ldots, m$. 
Let a face $E$ of $Q$ be given by $E=F_{i_1} \cap \dots \cap F_{i_k}$ for some facets $F_{i_1} , \dots, F_{i_k}$. 
Then we can associate a cone $\sigma_E$ to each face $E$ of $Q$
as follows:
$$\sigma_E = \text{Cone}\{\lambda_{i_1}, \dots, \lambda_{i_k}\}.$$

The fan $\Sigma_Q$ associated to a lattice polytope $Q$, 
which is called the \emph{normal fan} of $Q$, is the collection 
$\{\sigma_E\mid E \text{ is a face of } Q\}$ whose face structure 
naturally follows from the face structure of $Q$.  In general, we call a fan $\Sigma$ \emph{polytopal} 
if $\Sigma$ is a normal fan of a lattice polytope. 

Finally, the \emph{toric variety} $X_\Sigma$ corresponding to a fan $\Sigma$ is 
defined by taking disjoint union $\bigsqcup_{\sigma\in \Sigma}U_\sigma$ and 
the gluing is by rational maps determined by the generating rays of two cones 
$\sigma_1$ and $\sigma_2$ with nonempty intersections. In particular, the torus 
$(\CC^\ast)^n\cong U_{\{0\}} \subset U_\sigma$ of each affine toric variety 
is identified by the gluing, so yields the action of $(\CC^\ast)^n$ on 
$X_\Sigma$. 
Further details may be found in \cite[Section 1.4]{Ful-ITV}.

It is well-known from literature, such as \cite{CLS, Ew, Ful-ITV}, that 
a fan $\Sigma$ is \emph{polytopal} if and only if the corresponding toric variety 
$X_\Sigma$ is \emph{projective}, and $\Sigma$ is \emph{simplicial}, 
i.e., $\Sigma$ consists of simplicial cones, if and only if $X_\Sigma$ is an 
\emph{orbifold.}

Observe that if a polytopal fan $\Sigma$ is simplicial, then the corresponding 
polytope $Q$ is simple. Moreover, the primitive inward normal vector $\lambda_i$ 
of each facet $F_i\in \mathcal{F}(Q)$ satisfies the condition 
\eqref{eq_R-char_function_condition} since $Q$ is a simple lattice polytope.
Hence, such a fan $\Sigma$ naturally 
yields an $\mathcal{R}$-characteristic pair $(Q, \lambda)$, introduced in 
Subsection \ref{toric_orbifolds}, by defining $\lambda(F_i)$ to be the 
primitive inward normal vector of a facet $F_i\in \mathcal{F}(Q)$, 
see Remark \ref{rmk_simple_lattice_polytope}. 


In particular, if an $\mathcal{R}$-characteristic pair 
$(Q, \lambda)$ is defined from a polytopal fan $\Sigma$ as above, 
then one can show that there is a $T^n$-equivariant homeomorphism 
\begin{equation}\label{eq_toric_orb_iso}
X(Q, \lambda) \cong X_\Sigma,
\end{equation}
where $T^n$-action on $X(Q, \lambda)$ is 
the multiplication on the second factor (see \eqref{eq_def_of_toric_orb}) and $T^n$-action on $X_\Sigma$ 
is provided by regarding $T^n$ as a compact torus in $(\CC^\ast)^n$, 
see \cite[Chapter 12]{CLS} and \cite[Section 7]{DJ}.
Hence, one can apply theorems of Subsection \ref{toric_orbifolds} 
to a projective toric variety from a simplicial fan, i.e., a toric variety
from a normal fan of a simple lattice polytope. 



Recently, the authors of \cite{BSS} computed the integral cohomology 
ring of certain toric varieties associated to a normal fan 
$\Sigma$ of a simple polytope $Q$. The description was given in terms of the 
\emph{weighted Stanley--Reisner ring} $w\mathcal{SR}[\Sigma]$ of a 
simplicial fan $\Sigma$, which is a certain subring of the usual 
Stanley--Reisner ring $\mathcal{SR}[Q]=\ZZ[x_1, \dots, x_m]/\mathcal{I}$,
see \cite[(5.3)]{BSS}. This subring is determined 
by an \emph{integrality condition} which encodes the singularity of 
each fixed point. Then, assuming $H^{odd}(X_\Sigma;\ZZ)=0$,  
it was shown that there is an isomorphism between $H^\ast(X_\Sigma;\ZZ)$ and 
$w\mathcal{SR}[\Sigma]/ \mathcal{J}$, where $\mathcal{J}$ is an ideal
generated by the linear relations in \eqref{eq_linear_relations}, which
 is determined by the geometry of $\Sigma$. Hence, by the aid of 
Theorem \ref{thm_even_cells_no_p-torsion}, we get the following theorem 
which generalizes a result of \cite[Theorem 5.3]{BSS}.



\begin{theorem}\label{cor_cohom_of_toric_orb_with_gcd_cond.}
Let $X_\Sigma$ be a projective toric variety associated to a normal 
fan $\Sigma$ of a simple polytope $Q$ with $m$ facets. 
If for each prime $p$ there is a retraction sequence $\{(B_i, E_i, v_i)\}_{i=1}^\ell$
of $Q$ such that $\gcd\{p, g_{E_i}(v_i)\}=1$, 
then the cohomology ring of  $X_\Sigma$  is
$$H^\ast(X_\Sigma;\Z) \cong w\mathcal{SR}[\Sigma]/\mathcal{J}\subseteq 
\ZZ[x_1, \dots, x_m]/\mathcal{I}+\mathcal{J},$$
where $\mathcal{I}$ is the Stanley--Reisner ideal of $Q$ and 
$\mathcal{J}$ is the ideal generated by linear relations 
\begin{equation}\label{eq_linear_relations}
\sum_{i=1}^m \langle \lambda_i, \mathbf{e}_j\rangle x_i=0, 
\quad  j=1, \dots, n,
\end{equation}
where $\deg x_i=2$, $\mathbf{e}_j$ denotes the $j$-th standard unit vector in $\Z^n$
and $\lambda_i$ is defined by \eqref{eq_simple_polytope}.
\end{theorem}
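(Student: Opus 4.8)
The plan is to reduce the statement to two inputs: the torsion-freeness and evenness delivered by the \textbf{q}-CW machinery of Section \ref{sec_hom_q-cw_comp}, and the ring-level description of \cite[Theorem 5.3]{BSS}, which becomes available precisely once we know that $H^{odd}(X_\Sigma;\Z)=0$. In other words, the theorem is an assembly result: the genuinely new contribution is to supply the evenness hypothesis of \cite{BSS} from the weaker, per-prime retraction-sequence condition.

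First I would invoke the equivariant homeomorphism \eqref{eq_toric_orb_iso}. Since $\Sigma$ is the normal fan of the simple lattice polytope $Q$ it is simplicial, so $X_\Sigma$ is an orbifold, and assigning to each facet $F_i$ its primitive inward normal vector $\lambda_i$ produces an $\mathcal{R}$-characteristic pair $(Q,\lambda)$ with $X(Q,\lambda)\cong X_\Sigma$ as spaces with $T^n$-action, as recorded in the discussion preceding the theorem. Under this identification the singularity integers $g_{E_i}(v_i)$ attached to a retraction sequence of $Q$ via \eqref{eq_g_E(v)} are exactly the ones entering Theorem \ref{no-tor-on-toric}.

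Next, the hypothesis---that for every prime $p$ there is a retraction sequence $\{(B_i,E_i,v_i)\}_{i=1}^{\ell}$ with $\gcd\{p,g_{E_i}(v_i)\}=1$---is verbatim the hypothesis of Theorem \ref{no-tor-on-toric}. Applying that theorem (which in turn rests on Proposition \ref{ret_build} together with Theorem \ref{thm_even_cells_no_torsion}), I conclude that $H^\ast(X(Q,\lambda);\Z)\cong H^\ast(X_\Sigma;\Z)$ is torsion free and concentrated in even degrees; in particular $H^{odd}(X_\Sigma;\Z)=0$.

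Finally, with evenness in hand, I would feed this into \cite[Theorem 5.3]{BSS}, which computes $H^\ast(X_\Sigma;\Z)$ as $w\mathcal{SR}[\Sigma]/\mathcal{J}$ under the single standing hypothesis $H^{odd}(X_\Sigma;\Z)=0$. Since that hypothesis now holds, the stated ring isomorphism and its embedding into $\Z[x_1,\dots,x_m]/(\mathcal{I}+\mathcal{J})$ follow directly, with $\mathcal{J}$ generated by the linear relations \eqref{eq_linear_relations}. I expect the main obstacle to be bookkeeping rather than conceptual: one must check that the orbifold singularity data $g_{E_i}(v_i)$, read off from the $\mathcal{R}$-characteristic pair built from the inward normals, agrees with the integrality data encoded in $w\mathcal{SR}[\Sigma]$, so that the evenness produced by Theorem \ref{no-tor-on-toric} is genuinely the input consumed by \cite[Theorem 5.3]{BSS}. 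The ring structure itself is entirely the content of the earlier paper; the novelty is only that the per-prime retraction condition of Theorem \ref{no-tor-on-toric} replaces the uniform gcd condition of Theorem \ref{thm_gcd_con} as the trigger for evenness.
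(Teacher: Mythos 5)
Your proposal matches the paper's own argument: identify $X_\Sigma$ with $X(Q,\lambda)$ via \eqref{eq_toric_orb_iso}, apply Theorem \ref{no-tor-on-toric} to obtain $H^{odd}(X_\Sigma;\Z)=0$ from the per-prime retraction hypothesis, and then invoke the ring computation of \cite{BSS} which requires exactly that evenness. This is precisely how the paper derives the theorem, so the proposal is correct and takes the same route.
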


\subsection{Torus orbifolds}\label{subsec_torus_orb}
A \emph{torus orbifold} is a $2n$-dimensional closed orbifold 
with an action of the $n$-dimensional real torus with non-empty 
fixed points. Introduced by Hattori and Masuda \cite{HM}, they are a far reaching generalization, beyond the spaces $X(Q, \lambda)$, of singular toric varieties having orbifold singularities. 
In this subsection, we recall the definition of locally standard torus 
orbifolds and apply Theorem \ref{thm_even_cells_no_p-torsion} and \ref{thm_even_cells_no_torsion}  on this class.

The faces of a manifold with corners can be defined following \cite[Section 6]{Dav}.  
A manifold with corners is called \emph{nice} if a codimension $2$ face is a 
connected component of the intersection of two codimension $1$ faces. 
Let $P$ be an $n$-dimensional nice manifold with corners 
and $\mathcal{F}(P)=\{F_1, \dots, F_m\}$ the codimension-$1$ 
faces of $P$. One can define $\mathcal{R}$-characteristic function
$\lambda$ on $P$ satisfying the condition \eqref{eq_R-char_function_condition}. 
In Figure \ref{fig-eg2}, we give some examples of nice manifolds with corners and
$\mathcal{R}$-characteristic functions on them. 

A torus orbifold $X_P(\lambda)$ from a pair $(P, \lambda)$, where 
$P$ is a nice manifold with corners and $\lambda$ is an 
$\mathcal{R}$-characteristic function on $P$, can be defined similarly to 
the construction of a toric orbifold from an $\mathcal{R}$-characteristic 
pair, see for example \cite{KMZ, MMP, MP} and \eqref{eq_def_of_toric_orb} in
Subsection \ref{toric_orbifolds}. 
Moreover, all the notation of the local structures, such as $g_Q(v)$ and 
$g_E(v)$ of a toric orbifold in 
Subsection \ref{toric_orbifolds} naturally extends to a torus orbifold $X_P(\lambda)$. 
We remark that a certain non-convex
manifold with corners and corresponding torus manifolds are studied
in \cite{PS2} .

\begin{example}\label{eg02}
In this example we exhibit two 3-dimensional manifolds with corners and 
an $\mathcal{R}$-characteristic function on each.
The figure (a) is a cylinder over an eye-shape, which is not a simple polytope.
In figure (b),  the intersection of the facets $ABCDEF$ and $BCHEFG$ is the
disjoint union of edges $BC$ and $EF$, so it is also not a simple polytope. 

\begin{figure}[ht]
        \centerline{
           \scalebox{.56}{
            \input{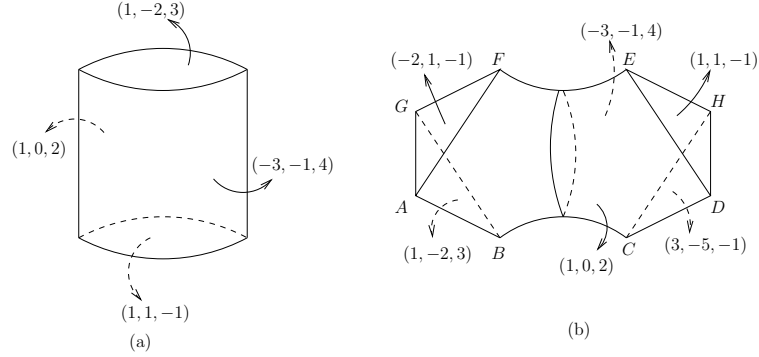}
            }
          }
 \caption{Two $\mathcal{R}$-characteristic functions.}
 \label{fig-eg2}
 \end{figure} 
 \end{example}

\begin{figure}[ht]
        \centerline{
           \scalebox{.50}{
            \input{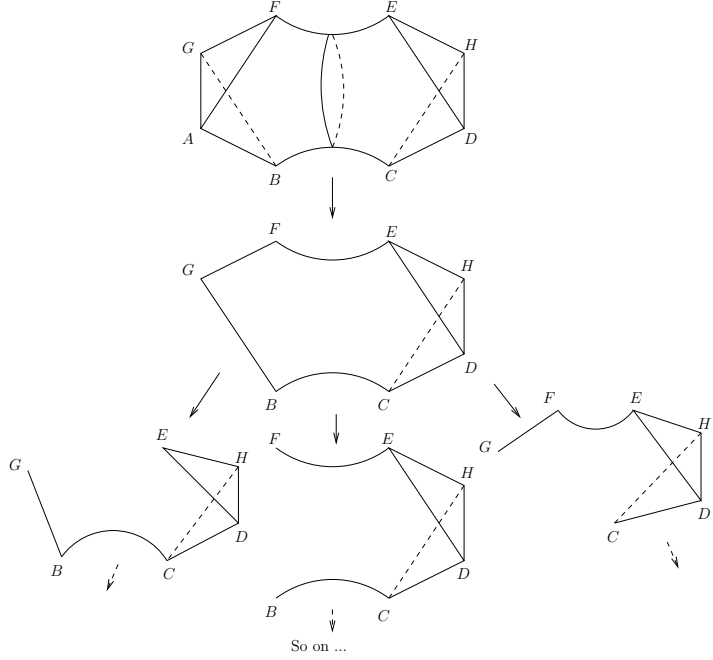}
            }
          }
 \caption{Some retraction sequences of a nice manifold with corners.}
 \label{fig-eg3}
 \end{figure} 

We remark that a retraction sequence may exist for a nice manifold
with corners which is not necessarily a simple polytope; see Figure
\ref{fig-eg3} for an example. We also note that if  $\{(B_i, E_i, v_i)\}_{i=1}^\ell$
is a retraction sequence of a nice manifold with corners $P$ and $X_P(\lambda)$
is a torus orbifold, then one can construct a building sequence for
$X_P(\lambda)$ analogous to Proposition \ref{ret_build}.
Now we can apply Theorem \ref{thm_even_cells_no_torsion} to these class of orbifolds.

\begin{corollary}
Let $X_P(\lambda)$ be a torus orbifold over the nice manifold with corners
$P$. If for each prime $p$ there is a retraction $\{(B_i, E_i, v_i)\}_{i=1}^\ell$
of $P$ such that $\gcd\{p, g_{E_i}(v_i)\}=1$, then $H^\ast(X_P(\lambda); \ZZ)$ 
is torsion free and concentrated in even degrees. 
\end{corollary}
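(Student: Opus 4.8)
The plan is to mirror the proof of Theorem \ref{no-tor-on-toric}, reducing the statement to the combination of a building-sequence construction and Theorem \ref{thm_even_cells_no_torsion}. The essential point is to upgrade Proposition \ref{ret_build} from simple polytopes to nice manifolds with corners: I must show that a retraction sequence $\{(B_i, E_i, v_i)\}_{i=1}^\ell$ of $P$ induces a building sequence $\{(Y_i, 0_i)\}_{i=1}^\ell$ of $X_P(\lambda)$ consisting only of even-dimensional $\mathbf{q}$-cells, with the order $|G_i|$ of the attaching group equal to $g_{E_i}(v_i)$. Once this is in hand, the gcd hypothesis transfers verbatim into the hypothesis of Theorem \ref{thm_even_cells_no_torsion}.

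First I would verify that the preimage $\pi^{-1}(E_i)$ of each face $E_i$ under the orbit map $\pi \colon X_P(\lambda) \to P$ is itself a torus orbifold, namely $X_{E_i}(\lambda_{E_i})$, the analogue of \cite[Proposition 3.3]{BSS}. Since $P$ is nice, each face $E_i$ is again a nice manifold with corners, and the induced characteristic function $\lambda_{E_i}$ is defined exactly as in \eqref{eq_def_of_induced_char_ftn}. For a free vertex $v_i$ of $B_i$ lying in $E_i$ with $\dim E_i = n-k$, the open set $U_i \subset E_i$ obtained by deleting the faces of $E_i$ not containing $v_i$ is homeomorphic to $\RR^{n-k}_{\geq 0}$ as a manifold with corners, precisely because $v_i$ is a free vertex.

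Next I would apply the orbifold chart construction of \cite[Subsection 2.1]{PS} to the torus orbifold $X_{E_i}(\lambda_{E_i})$ around the point $0_i = \pi^{-1}(v_i)$. This exhibits $\pi^{-1}(U_i)$ as the quotient $D^{2(n-k)}/G_i$ of an open disc by the finite group
$$G_i = \ZZ^{n-k}/\text{span}_\ZZ\{\lambda_{E_i}(E_i \cap F_{i_1}), \ldots, \lambda_{E_i}(E_i \cap F_{i_{n-k}})\},$$
whose order is precisely $g_{E_i}(v_i)$ by \eqref{eq_g_E(v)}. Since $P = \bigcup_{i=1}^\ell U_i$, setting $Y_i = \bigcup_{j \geq \ell - i + 1}\pi^{-1}(U_j)$ produces a building sequence for $X_P(\lambda)$ in which every $\mathbf{q}$-cell has even dimension $2(n-k)$. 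The hypothesis $\gcd\{p, g_{E_i}(v_i)\} = 1$ then reads $\gcd\{p, |G_i|\} = 1$ for every $i$, so Theorem \ref{thm_even_cells_no_torsion} applies and yields that $H_\ast(X_P(\lambda); \ZZ)$ is torsion free with $H_{\text{odd}}(X_P(\lambda); \ZZ) = 0$; since all integral homology is then free and concentrated in even degrees, the universal coefficient theorem forces the $\text{Ext}$ terms to vanish and gives the same conclusion for $H^\ast(X_P(\lambda); \ZZ)$.

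The main obstacle is the very first step: the argument of \cite[Proposition 3.3]{BSS} that $\pi^{-1}(E)$ is again a torus orbifold was formulated for simple polytopes, and one must check that it survives when $P$ is only a nice manifold with corners. The two features used there are that faces intersect in the expected combinatorial pattern and that the orbifold charts of \cite{PS} glue consistently; the niceness of $P$---a codimension-$2$ face being a connected component of the intersection of two facets---is exactly what guarantees the local model $\RR^{n-k}_{\geq 0}$ at a free vertex, and hence the validity of the chart construction and the identification $|G_i| = g_{E_i}(v_i)$. Once this local-to-global compatibility is confirmed, the remainder is a direct transcription of Proposition \ref{ret_build} and Theorem \ref{no-tor-on-toric}.
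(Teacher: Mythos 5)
Your proposal matches the paper's argument: the paper likewise treats this corollary as an immediate consequence of constructing a building sequence ``analogous to Proposition \ref{ret_build}'' from the retraction sequence of $P$ (with $|G_i| = g_{E_i}(v_i)$ coming from the orbifold chart construction of \cite{PS}) and then invoking Theorem \ref{thm_even_cells_no_torsion}. Your extra attention to verifying that the face-preimage and chart arguments survive the passage from simple polytopes to nice manifolds with corners is a point the paper leaves implicit, but it is the same route.
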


\subsection{A $\mathbf{q}$-cell structure for Weighted Grassmannians}\label{subsec_wGr}
In this subsection, we  introduce briefly the construction of a weighted 
Grassmannian $\mathbf{w}Gr(d,n)$ \cite{CR-wgr} and its $\mathbf{q}$-CW structure
\cite[Section 2]{AM}. In addition, we show an application of Theorem
\ref{thm_even_cells_no_p-torsion} to certain weighted Grassmanians. 

We first consider the action of the general linear group $GL_n(\CC)$ on 
the $d$-th exterior product $\wedge^d\CC^n$ of $\CC^n$ induced
from the canonical action of $GL_n(\CC)$ on $\CC^n$. It naturally 
induces the action of $GL_n(\CC)$ on the projective space 
$\mathbb{P}(\wedge^d \CC^n)$. The standard Pl\"ucker embedding of
 the Grassmannian $Gr(d,n)$ into $\mathbb{P}(\wedge^d \CC^n)$
is given by the $GL_n(\CC)$ orbit of $[e_1\wedge \dots \wedge e_d]$. 

Now, we consider 
$$aPl(d,n)^\ast :=  GL_n(\CC)\cdot (\CC e_1\wedge \cdots \wedge e_d)    
\setminus \{\mathbf 0\},$$ 
where 
$GL_n(\CC)\cdot (\CC e_1\wedge \cdots \wedge e_d)$ 
denotes the $GL_n(\CC)$-orbit of the line 
$\CC e_1\wedge \cdots \wedge e_d \subset \wedge^d\CC^n$ generated by
$e_1\wedge \cdots \wedge e_d$, 
and $\mathbf{0}$ denotes the origin in $\wedge^d\CC^n$. 
Next, for the maximal torus $T$ in $GL_n(\CC)$, we consider 
\begin{equation}\label{eq_group_K}
K:=T\times \CC^\ast \cong (\CC^\ast)^{n+1}
\end{equation}
and its action on 
$\wedge^d \CC^n$, where $T$ acts naturally as a subgroup of 
$GL_n(\CC)$ and $\C^\ast$ acts by the scalar multiplication on the 
vector space $\wedge^d \CC^n$. Then, $aPl(d, n)$ is equipped with a  
$K$-action induced from the $K$-action on $\wedge^d \CC^n$. 

Finally, we choose a vector 
$\mathbf{w}:=(w_1, \dots, w_n, r) \in \ZZ^n_{\geq 0} \oplus \ZZ_{>0}$ and 
we define 
$$\mathbf{w}D:=\{(t^{w_1}, \dots, t^{w_n}, t^r)\in K \mid t\in \CC^\ast\},$$
the weighted diagonal subgroup of $K$ with respect to the vector 
$\mathbf{w}$. 
Then, the weighted Grassmannian $\mathbf{w}Gr(d,n)$ is defined by the quotient 
$$\mathbf{w}Gr(d,n):=aPl(d,n)^\ast / \mathbf{w}D.$$
Notice that $\mathbf{w}Gr(d, n)$ equips with the action of residual torus $K/\mathbf{w}D$. 

\begin{example}
\begin{enumerate}
\item Choose $\mathbf{w}=(0, \dots, 0, 1)\in \ZZ^n_{\geq 0}\oplus \ZZ_{>0}$, 
which  defines $\mathbf{w}D=(1, \dots, 1, t)\subset (\CC^\ast)^{n+1}$. 
The quotient space $aPl(d, n)^\ast/\mathbf{w}D$ is the image of 
the standard Pl\"ucker embedding of Grassmannian 
$Gr(d,n)$ into $\mathbb{P}(\wedge^d\CC^n)$. 
\item Consider the case when $d=1$ and choose an arbitrary weighted vector 
$\mathbf{w}=(w_1, \dots, w_n, r)$. Then, $GL_n(\CC)$-orbit of the line $\CC e_1$
is isomorphic to $\CC^n$. Hence, $aPl(1, n)^\ast=\CC^n \setminus \{\mathbf 0\}$. 
Notice that the non-zero scalar multiple of the first column of $GL_n(\CC)$ generates the space
 $\CC^n \setminus \{\mathbf 0\}$. Hence, the action of $\mathbf{w}D$ on 
$\CC^n \setminus \{ \mathbf{0}\}$ is given by 
$$(t^{w_1}, \dots, t^{w_n}, t^r) \cdot (z_1, \dots, z_n)= 
(t^{w_1+r}z_1, \dots, t^{w_n+r}z_1),$$
which leads us to the weighted projective space $\CP^n_{(w_1+r, \dots, w_n+r)}$.  
\end{enumerate}
\end{example}

One natural way to describe a $\mathbf{q}$-CW complex structure on $\mathbf{w}Gr(d,n)$
is to use the classical Schubert cell decomposition of $Gr(d, n)$.
Let $\alpha=\{i_1  < \dots< i_d\}$ be a subset of $[n]:=\{1, \dots, n\}$. 
Then, a $(d\times n)$-matrix of the following row echelon form 
represents a cell $e_\alpha$ with dimension $\sum_{k=1}^d (i_k-k)$ of 
$Gr(d, n)$;
$$\begin{blockarray}{ccccccccccccccc}
\begin{block}{(ccccccccccccccc)}
\ast &\cdots &\ast &1&0&\cdots &&&&&&&&\cdots &0 \\
\ast&\cdots&\ast&0&\ast&\cdots&\ast&1&0&\cdots&&&&\cdots&0 \\
\ast&\cdots&\ast&0&\ast&\cdots&\ast&0&&&&&&& \\
\vdots&&\vdots&\vdots&&&&\vdots&&&\ddots&&&&\vdots \\
\ast&\cdots&\ast&0&\ast&\cdots&\ast&0&\ast&\cdots&\ast&1&0&\cdots&0\\
\end{block}
&&&\overset{\uparrow}{i_1\text{-th}}&&&&\overset{\uparrow}{i_2\text{-th}}&&\cdots&&\overset{\uparrow}{i_d\text{-th}}&&&
\end{blockarray}$$
The call $e_\alpha$ is called the \emph{Schubert cell} corresponding to 
the subset $\alpha$. In the literature, a {\em Young diagram}
corresponding to $(i_1-1, \dots, i_d-d)$ is used to describe $e_\alpha$. 
For example, if $n=10$, $d=3$ and $\alpha=\{2<5<9\}$, the Schubert cell
$e_{ \{2,5,9\}}$ and the corresponding Young diagram are described below in  
 Figure \ref{fig_Schubert_cell_and_Young_diag}. 
\begin{figure}[h]
\begin{tikzpicture}
\node at (0,0) {$\begin{bmatrix}
\ast&1&0&0&0&0&0&0&0&0\\
\ast&0&\ast&\ast&1&0&0&0&0&0\\
\ast&0&\ast&\ast&0&\ast&\ast&\ast&1&0
\end{bmatrix}
$};
\draw[<->] (3,0)--(4,0);
\begin{scope}[scale=0.4, xshift=110mm, yshift=-15mm]
\draw (0,0)--(6,0)--(6,1)--(3,1)--(3,2)--(1,2)--(1,3)--(0,3)--cycle;
\draw (1,0)--(1,2);
\draw (2,0)--(2,2);
\draw (3,0)--(3,2);
\draw (5,0)--(5,1);
\draw (4,0)--(4,1);
\draw (0,1)--(3,1);
\draw (0,2)--(1,2);
\end{scope}
\end{tikzpicture}
\caption{A Schubert cell and Young diagram.}
\label{fig_Schubert_cell_and_Young_diag}
\end{figure}

\begin{remark}
\begin{enumerate}
\item The Young diagram as defined in \cite{Ful-YT} has the weakly decreasing 
number of boxes in each rows. In this article, we use the 
\emph{weakly increasing number of boxes} in each rows as a convention. 
\item The collection of Young diagram has an obvious partial order, namely
two Young diagrams $\square_1 \subseteq \square_2$ if and only if 
$\square_1$ ``fits" inside $\square_2$. 
\end{enumerate}
\end{remark}

Finally, the Schubert cell decomposition is given by 
\begin{equation}\label{eq_Schubert_cell_decomp}
Gr(d, n)=\bigsqcup_{\alpha\in [n]} e_\alpha.
\end{equation}
The complex dimension of the cell $e_\alpha$ is exactly the number of 
boxes in the corresponding Young diagram, and the information about 
attaching maps can be obtained from the partial order 
given by the inclusion relations among the Young diagrams. 

In order to describe a $\mathbf{q}$-CW structure for $\mathbf{w}Gr(d,n)$, 
we note that the Schubert cell decomposition \eqref{eq_Schubert_cell_decomp}
together with the orbit map $\pi \colon aPl(d,n)^\ast \to Gr(d,n)$
induces the following cell decomposition;
\begin{equation}\label{eq_cell_decomp_aPl}
aPl(d,n)^\ast=\bigsqcup_{\alpha\in [n]}\pi^{-1}(e_\alpha).
\end{equation}
Next, we recall from  \cite[Subsection 2.2]{AM} 
that each cell $\pi^{-1}(e_\alpha)$ is $K$-invariant, where $K$
is defined in \eqref{eq_group_K}. Hence, the cell decomposition 
\eqref{eq_cell_decomp_aPl} descends to the cell decomposition 
of the weighted Grassmannian.  
\begin{proposition}(\cite[Proposition 2.3]{AM})
$$\mathbf{w}Gr(d,n) = \bigsqcup_{\alpha \in [n]} e_\alpha/G_\alpha,$$
where $G_\alpha=\{ (t^{w_1}, \dots, t^{w_n}, t^r)\in K \mid t\in \CC^\ast, ~
t^{w_\alpha}=1, ~ w_\alpha= r+\sum_{i\in \alpha}w_i\}$. 
\end{proposition}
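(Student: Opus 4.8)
The plan is to prove the decomposition one cell at a time. Starting from the $K$-invariant decomposition $aPl(d,n)^\ast=\bigsqcup_{\alpha\in[n]}\pi^{-1}(e_\alpha)$ of \eqref{eq_cell_decomp_aPl}, I first observe that since $\mathbf{w}D\subset K$, each piece $\pi^{-1}(e_\alpha)$ is $\mathbf{w}D$-invariant; hence passing to the quotient gives $\mathbf{w}Gr(d,n)=\bigsqcup_{\alpha}\pi^{-1}(e_\alpha)/\mathbf{w}D$ as a union of disjoint pieces. The whole problem therefore reduces to producing, for a fixed $\alpha=\{i_1<\cdots<i_d\}$, a homeomorphism $\pi^{-1}(e_\alpha)/\mathbf{w}D\cong e_\alpha/G_\alpha$.

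To analyze a single piece I would trivialize the restriction of $\pi\colon \pi^{-1}(e_\alpha)\to e_\alpha$. Recall that $\pi$ is the restriction of the projectivization of $\wedge^d\CC^n$, i.e. the quotient by scalar multiplication, so $\pi^{-1}(e_\alpha)$ is the punctured cone over the cell and is a principal $\CC^\ast$-bundle over $e_\alpha$. Let $s\colon e_\alpha\to\pi^{-1}(e_\alpha)$ send a point of the cell to the Pl\"ucker vector of its normalized row-echelon representative, namely the unique representative with leading Pl\"ucker coordinate $p_\alpha=1$. Since every nonzero vector in the cone over $e_\alpha$ is a unique nonzero scalar multiple of such a representative, the map $(x,\lambda)\mapsto \lambda\,s(x)$ is a homeomorphism $e_\alpha\times\CC^\ast\xrightarrow{\ \cong\ }\pi^{-1}(e_\alpha)$, in which the $\CC^\ast$-coordinate records exactly the value of $p_\alpha$.

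The key computation is to express the $\mathbf{w}D$-action in these coordinates. Writing $w_\alpha=r+\sum_{i\in\alpha}w_i$ as in the statement, the subgroup $\mathbf{w}D$ scales the Pl\"ucker coordinate $p_\beta$ by $t^{\,r+\sum_{j\in\beta}w_j}$, so it scales the leading coordinate $p_\alpha$ by $t^{w_\alpha}$. Using the $K$-invariance of $\pi^{-1}(e_\alpha)$, the vector $t\cdot s(x)$ again lies over $e_\alpha$, and renormalizing its leading coordinate to $1$ exhibits it as $t^{w_\alpha}\,s(\tilde t\cdot x)$ for a unique $\tilde t\cdot x\in e_\alpha$; this defines the induced torus action of $\mathbf{w}D$ on the cell. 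Consequently the action reads $t\cdot(x,\lambda)=(\tilde t\cdot x,\ t^{w_\alpha}\lambda)$.

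It then remains to take the quotient. Because $r>0$ forces $w_\alpha\ge 1$, the homomorphism $t\mapsto t^{w_\alpha}$ is surjective onto $\CC^\ast$, so every $\mathbf{w}D$-orbit meets the slice $\{\lambda=1\}\cong e_\alpha$; moreover $(x,1)$ and $(x',1)$ lie in one orbit exactly when $x'=\tilde t\cdot x$ for some $t$ with $t^{w_\alpha}=1$, that is, for $t$ in the group of $w_\alpha$-th roots of unity, which is precisely $G_\alpha$. This yields $\pi^{-1}(e_\alpha)/\mathbf{w}D\cong e_\alpha/G_\alpha$, and assembling these homeomorphisms over all $\alpha$ gives the asserted decomposition. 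I expect the main obstacle to be the middle step: verifying rigorously that the $\mathbf{w}D$-action splits as claimed, in particular that $\tilde t$ is a well-defined self-homeomorphism of the cell (which is exactly where the $K$-invariance of $\pi^{-1}(e_\alpha)$ is used), together with checking that the set-theoretic bijection of quotients is a homeomorphism for the quotient topologies.
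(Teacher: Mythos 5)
Your argument is correct and follows the same route the paper indicates (and attributes to \cite[Proposition 2.3]{AM}): the $K$-invariance of each $\pi^{-1}(e_\alpha)$ lets the decomposition \eqref{eq_cell_decomp_aPl} descend to the $\mathbf{w}D$-quotient, and the slice $\{p_\alpha=1\}$ identifies $\pi^{-1}(e_\alpha)/\mathbf{w}D$ with $e_\alpha/G_\alpha$ because $\mathbf{w}D$ scales $p_\alpha$ by $t^{w_\alpha}$ with $w_\alpha\geq r>0$. The paper itself states this proposition with a citation and no proof, so your trivialization-and-slice computation correctly supplies the details that are deferred to \cite{AM}.
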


\begin{figure}
\begin{tikzpicture}
\begin{scope}[scale=0.4]
\node at (0.5,-2) {$\varnothing$};
\draw (0,0)--(0,1)--(1,1)--(1,0)--cycle; 
\draw (3,2)--(4,2)--(4,3)--(2,3)--(2,2)--(3,2)--(3,3); 
\draw (-2,3)--(-2,2)--(-1,2)--(-1,4)--(-2,4)--(-2,3)--(-1,3); 
\draw (-0.5, 5+1)--(1.5, 5+1)--(1.5, 6+1)--(0.5, 6+1)--(0.5, 7+1)--(-0.5, 7+1)--cycle; 
\draw (-0.5, 6+1)--(0.5, 6+1)--(0.5, 5+1); 
\draw (-0.5, 8+2)--(1.5, 8+2)--(1.5, 10+2)--(-0.5, 10+2)--cycle; 
\draw (0.5, 8+2)--(0.5, 10+2); \draw (-0.5, 9+2)--(1.5, 9+2);

\draw (0.5,-1.1)--(0.5, -0.5);
\draw (-0.2 ,1.2)--(-0.8,1.8);
\draw (1.2, 1.2)--(1.8, 1.8);
\draw (-1.5,4.3)--(-0.7,5.8);
\draw (3, 3.3)--(1.7, 5.8);
\draw (0.5,9.7)--(0.5, 8.3);

\end{scope}

\begin{scope}[xshift=5cm, scale=0.4]
\node at (0,-2) {$e_{12}/G_{12}$};
\node at (0,0.5) {$e_{13}/G_{13}$};
\node at (-3, 3) {$e_{23}/G_{23}$};
\node at (3, 3){$e_{14}/G_{14}$};
\node at (0, 7) {$e_{24}/G_{24}$};
\node at (0, 11) {$e_{34}/G_{34}$};

\draw (0,-1.1)--(0, -0.5);
\draw (-1.2 ,1.2)--(-1.8,2);
\draw (1.2, 1.2)--(1.8, 2);
\draw (-2,4)--(-1.2,5.8);
\draw (2, 4)--(1.2, 5.8);
\draw (0,9.7)--(0, 8.3);
\end{scope}

\end{tikzpicture}
\caption{Lattice of Young diagrams and $\mathbf{q}$-CW structure of 
$\mathbf{w}Gr(2,4)$.}
\label{fig_Young_lattice_Gr(2,4)}
\end{figure}
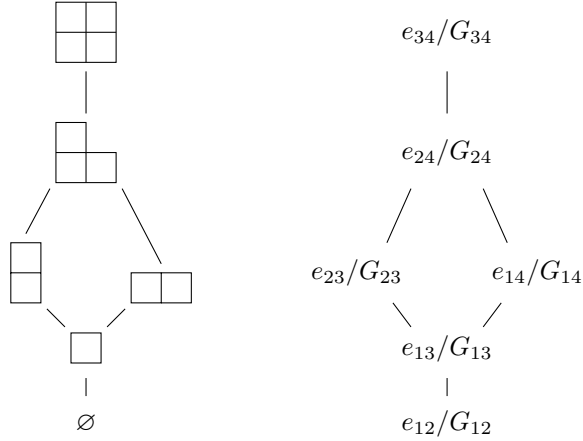

\begin{example}
Consider the weighted Grassmannian $\mathbf{w}Gr(2,4)$ 
determined by the weight 
$\mathbf{w}=(w_1, \dots, w_4, r)\in \ZZ_{\geq0}^4 \oplus \ZZ_{>0}$. 
The lattice structure of Young diagrams and $\mathbf{q}$-CW structure 
are described in Figure \ref{fig_Young_lattice_Gr(2,4)}. 
Here,  
$$G_{ij}=\{(t^{w_1}, t^{w_2}, t^{w_3}, t^{w_4}, t^r) \mid 
t\in \CC^\ast,~t^{r+w_i+w_j}=1\}.$$
For example, when $\mathbf{w}=(1,1,1,1,1)$, 
for each $\{i,j\}\subset [4]$, $G_{ij}\cong \ZZ/3\ZZ$. 
Hence, from Theorem \ref{thm_even_cells_no_p-torsion}, we conclude that 
$H^\ast(\mathbf{w}Gr(2,4);\ZZ)$ has no $p$-torsion unless $p=3$.
In general, one can conclude from this observation that the  
cohomology ring of the weighted Grassmannian $\mathbf{w}Gr(d, n)$, 
determined by the weighted vector of the form 
$\mathbf{w}=(1,\dots, 1,1)\in \ZZ_{\geq 0}^n \oplus \ZZ_{>0}$,
has no $p$-torsion unless $p=d+1$. 
\end{example}

\section{An extension to more general $\mathbf{q}$-CW complexes}
\label{sec_cells_in_every_dim}

In this section, we discuss general $\mathbf{q}$-CW
complexes which do not necessarily consist of even dimensional $\mathbf{q}$-cells.
Up to this point, the degree, in the sense of Definition \ref{def_degree} has
played no role. In fact, let $\{(Y_i, 0_i)\}_{i=1}^\ell$ be a building sequence
 satisfying the assumption of 
Theorem \ref{thm_even_cells_no_torsion}. The attaching map
$$\phi_i \colon S^{2k_i-1}/G_i \to Y_{i-1}$$
induces the \emph{degree} for each $i=1, \dots, \ell$. 
However, the induced homology map 
$$(\phi_i)_\ast \colon H_{2k_i-1}(S^{2k_i-1}/G_i;\ZZ) \to
H_{2k_i-1}(Y_{i-1};\ZZ)$$
is a zero map because $H_{2k_i-1}(Y_{i-1};\ZZ)$ is trivial by Theorem \ref{thm_even_cells_no_torsion}.

If a building sequence $\{(Y_i, 0_i)\}_{i=1}^\ell$ for a $\mathbf{q}$-CW complex $X$ 
has odd dimensional cells, a non-trivial attaching map is possible; this leads to the following theorem. 

\begin{theorem}\label{no_p-torsion_thm}
Let $X$ be a $\mathbf{q}$-CW complex, $p$ a prime number, and
$\{(Y_i, 0_i)\}_{i=1}^\ell$ a building sequence for $X$ such that $\gcd\{p, |G_i|\}=1$
for all $i$ with $e^{k_{i}}/G_i =Y_i \setminus Y_{i-1}$. If the degree of
each attaching map 
\begin{equation}\label{eq_attaching_map}
S^{k_{i}-1}/G_i \to Y_{i-1}
\end{equation}
as defined in Section \ref{sec_q-CW_complex},
is coprime to $p$ or zero, then $H_{\ast}(X; \Z)$
has no $p$-torsion.
\end{theorem}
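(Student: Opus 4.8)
The plan is to argue by induction along the building sequence $\{(Y_i,0_i)\}_{i=1}^{\ell}$, exactly as in the proof of Theorem \ref{thm_even_cells_no_p-torsion}, but now carrying the degree hypothesis through the single step where a free class can appear in relative homology. The base case $Y_1 = \mathrm{pt}$ is immediate. For the inductive step I would fix $i$, assume $H_\ast(Y_{i-1};\Z)$ has no $p$-torsion, and attach the $\mathbf{q}$-cell $\bar e^{k_i}/G_i$ along $\phi_i\colon S^{k_i-1}/G_i\to Y_{i-1}$. Writing $S = S^{k_i-1}/G_i$, contractibility of the cone $\bar e^{k_i}/G_i$ together with excision gives $H_j(Y_i,Y_{i-1}) \cong \widetilde H_{j-1}(S)$, and by naturality of the connecting homomorphism the boundary map $\partial\colon H_j(Y_i,Y_{i-1}) \to H_{j-1}(Y_{i-1})$ is identified with $(\phi_i)_\ast\colon \widetilde H_{j-1}(S)\to H_{j-1}(Y_{i-1})$. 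This is precisely the point at which Definition \ref{def_degree} enters.

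From the long exact sequence of the pair I would extract, for each $j$, the short exact sequence
$$0 \to \coker(\partial_{j+1}) \to H_j(Y_i) \to \ker(\partial_j) \to 0,$$
where $\partial_m\colon H_m(Y_i,Y_{i-1})\to H_{m-1}(Y_{i-1})$. Two elementary facts reduce the problem to the cokernels: an extension of two groups with no $p$-torsion has no $p$-torsion, and a subgroup of a group with no $p$-torsion has none. Since $\ker(\partial_j)\subseteq H_j(Y_i,Y_{i-1})\cong \widetilde H_{j-1}(S)$ is $p$-torsion-free by Corollary \ref{no_tor_in_lens} (as $\gcd\{p,|G_i|\}=1$), it suffices to show that each $\coker(\partial_{j+1})$ has no $p$-torsion.

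By Proposition \ref{prop_homology_orb_lens_sp}, $\widetilde H_{m}(S)$ is a finite $|G_i|$-torsion group, hence of order prime to $p$, for every $m$ except $m = k_i - 1$, where it is either such a group or $\Z$ (the latter exactly when the $G_i$-action preserves orientation). For $j+1 \ne k_i$ the image of $\partial_{j+1}$ is a finite subgroup of $H_j(Y_{i-1})$ of order prime to $p$, and quotienting a $p$-torsion-free group by such a subgroup leaves it $p$-torsion-free; so those cokernels are fine. The only delicate degree is $j = k_i-1$ with $H_{k_i}(Y_i,Y_{i-1})\cong\Z$, where $\partial$ sends a generator to $(\phi_i)_\ast(1) = (d_1,\dots,d_s,x)\in \Z^s\oplus K = H_{k_i-1}(Y_{i-1})$, and I must show that $(\Z^s\oplus K)\big/\langle (d_1,\dots,d_s,x)\rangle$ has no $p$-torsion under the hypothesis that $\deg\phi_i=\gcd\{d_m\neq 0\}$ is prime to $p$ or zero.

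This last computation is the main obstacle, and I would handle it by cases. If some $d_m$ is prime to $p$, suppose $(\mathbf v,k)$ represents $p$-torsion in the quotient, so $p(\mathbf v,k)=n\,(d_1,\dots,d_s,x)$ for some $n\in\Z$; comparing $m$-th free coordinates gives $p\,v_m = n\,d_m$, whence $p\mid n$ since $\gcd(p,d_m)=1$. Writing $n=pn'$ and cancelling $p$ (legitimate in the free summand, and in $K$ because $K$ has no $p$-torsion) yields $(\mathbf v,k)=n'(d_1,\dots,d_s,x)$, a contradiction. If $\deg\phi_i=0$, then either all $d_m=0$, so the generator lies in the prime-to-$p$ torsion summand $K$ and the cokernel is $\Z^s\oplus(K/\langle x\rangle)$, or there is no free part and the image is again prime-to-$p$ torsion; in both situations no $p$-torsion appears. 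Finally, when $H_{k_i-1}(S)$ is torsion rather than $\Z$ (the orientation-reversing case) the degree plays no role, and this degree is already covered by the generic argument above. Assembling the cokernel estimates through the short exact sequences shows $H_\ast(Y_i;\Z)$ has no $p$-torsion, completing the induction and hence the proof for $X = Y_\ell$.
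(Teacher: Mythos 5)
Your proposal is correct and follows essentially the same route as the paper's proof: induction along the building sequence, the long exact sequence of the cofibration $(Y_i,Y_{i-1})$ with $H_j(Y_i,Y_{i-1})\cong\widetilde H_{j-1}(S^{k_i-1}/G_i)$ and connecting map $(\phi_i)_\ast$, and the degree hypothesis invoked only at the critical dimension $j=k_i-1$ to show that $(\Z^s\oplus K)/\langle(d_1,\dots,d_s,x)\rangle$ has no $p$-torsion. Your packaging via the short exact sequences $0\to\coker\partial_{j+1}\to H_j(Y_i)\to\ker\partial_j\to 0$ is a tidier organization of the paper's three-case diagram chase with the maps $f,g,f',g'$, but the substance — including the observation that $\gcd\{d_m\}$ prime to $p$ forces some $d_m$ prime to $p$ and hence $p\mid n$ in the relation $p(\mathbf v,k)=n(d_1,\dots,d_s,x)$ — is identical.
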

\begin{proof}
Since $Y_{i}$ is obtained by attaching a $\mathbf{q}$-cell
 $\bar{e}^{k_i}/G_{i}$ to $Y_{i-1}$, we have the cofibration:
  \begin{equation}
\begin{tikzcd}
S^{k_i-1}/G_i \arrow{r} & Y_{i -1} \arrow[hookrightarrow]{r} & Y_i
\end{tikzcd}
\end{equation} 
which yields the long exact sequence: 
\begin{equation}\label{eq_les_near_top_homology}
\begin{tikzcd}[row sep=tiny, column sep=small]
\cdots \rar&   
\widetilde{H}_{j}(S^{k_i-1}/G_i) \rar& 
H_{j}(Y_{i-1}) \arrow{r} & 
H_{j}(Y_{i}) \arrow{r}  &   
\widetilde{H}_{j-1}(S^{k_i-1}/G_i) \arrow{r} & 
\cdots .
\end{tikzcd}
\end{equation}
The proof is similar to that of Theorem \ref{thm_even_cells_no_p-torsion}, 
except in the case when $j$ corresponds to the top two dimensions, $k_i$
and $k_i-1$. In this case, $\widetilde{H}_{k_i}(S^{k_i-1}/G_i)=0$ for
dimensional reasons. By the same induction hypothesis as Theorem \ref{thm_even_cells_no_p-torsion}, we assume that $H_{k_i}(Y_{i-1})$ and
$H_{k_i-1}(Y_{i-1})$ have no $p$-torsion. 
Hence, the sequence \eqref{eq_les_near_top_homology} becomes 
\begin{equation}\label{eq_les_near_top_homology(2)}
\begin{tikzcd}[row sep=tiny]
 0 \arrow{r} & \ZZ^r \oplus K_1 \arrow{r}{f}  & H_{k_i}(Y_{i}) \arrow{r}{g} & \widetilde{H}_{k_i-1}(S^{k_i-1}/G_i) &\\ 
 \arrow{r}{(\phi_i)_{\ast}}  & \ZZ^s\oplus K_2 \arrow{r}{f'}  & H_{k_i-1}(Y_{i})
 \arrow{r}{g'} & \text{$|G_i|$-torsion} \arrow{r} & \cdots 
\end{tikzcd}
\end{equation}
for some $r, s \in \NN\cup \{0\}$ and some $p$-torsion free  finite groups $K_1$ and $K_2$. 

In order to show both $H_{k_i}(Y_{i})$ and $H_{k_i-1}(Y_{i})$ have no $p$-torsion, 
we need to consider the following three cases prescribed by Proposition \ref{prop_homology_orb_lens_sp}. 
\begin{enumerate}
\item If $\widetilde{H}_{k_i-1}(S^{k_i-1}/G_i)=0$, then 
$H_{k_i}(Y_{i})$ is isomorphic to $\ZZ^r \oplus K_1$, hence it has no $p$-torsion. 
Next suppose that $H_{k_i-1}(Y_{i})$ has a $p$-torsion element, say $x$, then 
$g'(x)=0$ because $\gcd\{ p, |G_i|\}=1$. Hence, $x\in \ker g' = \im f'$.
Since $\widetilde{H}_{k_i-1}(S^{k_i-1}/G_i)=0$, the map $f'$ is injective.
It contradicts the $p$-torsion freeness of $K_2$. 
\item If $\widetilde{H}_{k_i-1}(S^{k_i-1}/G_i)$ is a non-trivial $|G_i|$-torsion, then an argument similar to that above shows that $H_{k_i}(Y_{i})$ has no $p$-torsion. 
Next, consider the following exact sequence:
$$
\begin{tikzcd}
\text{ a $|G_i|$-torsion} \arrow{r}{(\phi_i)_{\ast}} &
\ZZ^s\oplus K_2 \arrow{r}{f'} & 
H_{k_i-1}(Y_{i}) \arrow{r}{g'} & 
\text{a $|G_i|$-torsion}.
\end{tikzcd}
$$
Suppose that $H_{k_i-1}(Y_{i})$ has a $p$-torsion element $x$. Then the assumption 
$\gcd\{p, |G_i|\}=1$ implies $g'(x)=0$. Hence, we have the following nontrivial 
preimage
\begin{equation}\label{eq_preimage}
(f')^{-1}(x)=\{(a,b)\in \ZZ^s \oplus K_2 \mid f'(a,b)= x \}.
\end{equation} 
Notice that the first coordinate of an element in \eqref{eq_preimage} cannot be $0$, 
because $K_2$ is a $p$-torsion free finite group and for a homomorphism $h: G \to H$
of finite groups, the order of $h(a)$ divides the order of $a \in G$. Hence, there exists
at least one nontrivial element $a \in \ZZ^s$ such that $f'(a,b)=x$. 
In particular, since $x$ is a $p$-torsion element, 
$$\{n\cdot p\cdot (a,b)\mid n\in \ZZ\} \subseteq \ker f' =\im (\phi_i)_{\ast},$$
which contradicts the finiteness of a $|G_i|$-torsion.  
\item Finally, we consider the case when $\widetilde{H}_{k_i-1}(S^{k_i-1}/G_i)=\ZZ$ and
break the exact sequence \eqref{eq_les_near_top_homology(2)} into two parts as follows:
\begin{align}
 0 \longrightarrow ~ &\ZZ^r \oplus K_1
 \overset{f}{\longrightarrow}  H_{k_i}(Y_{i})   \overset{g}{\longrightarrow}  \im g \longrightarrow 0, 
 \label{eq_first_ex_seq_after_breaking}\\
0 \longrightarrow \coker g \overset{(\phi_i)_{\ast}}{\longrightarrow}  ~~
& \ZZ^s\oplus K_2 \overset{f'}{\longrightarrow}  
 H_{k_i-1}(Y_{i}) \overset{g'}{\longrightarrow} \text{a $|G_i|$-torsion}. \label{eq_second_ex_seq_after_breaking}
\end{align}
Since $\im g \cong n\ZZ$ for some $n\in \NN \cup \{0\}$, applying the same argument as in Case (1)
to \eqref{eq_first_ex_seq_after_breaking}  shows that $H_{k_i}(Y_{i})$ has no $p$-torsion elements. 


In order to show that $H_{k_i-1}(Y_{i})$ has no $p$-torsion elements,
 we first assume that $g$ is a non-zero map. 
Then $\coker g$ is finite, which implies that $H_{k_i-1}(Y_{i})$ is $p$-torsion free
by the same argument as in the second part of the case (2).

When the map $g$ is a zero map, then coker $g = \ZZ $. Now 
$$(\phi_i)_{\ast} \colon \widetilde{H}_{k_i-1}(S^{k_i-1}/G_i) \to H_{k_i-1}(Y_{i-1})$$
is the map induced from the attaching map  
$\phi_i \colon S^{k_i-1}/G_i \to Y_{i-1}$. 
Hence, the image of $1\in \ZZ\cong  \widetilde{H}_{k_i-1}(S^{k_i-1}/G_i)$ via $(\phi_i)_{\ast}$ 
determines the degree (see Definition \ref{def_degree}) of the attaching map, which is 
coprime to $p$ by the assumption.  

Now, we suppose that there is a $p$-torsion element $a\in  H_{k_i-1}(Y_{i})$. Since
$g'(a)=0$, there is an element, say $((c_1, \dots, c_s), y)\in \ZZ^s\oplus K_2$, such that 
$f'((c_1, \dots, c_s), y)=a$. We notice that 
 $(c_1, \dots, c_s)\neq (0, \dots, 0)$, 
because $f'((0, \dots, 0), y)$ cannot be a $p$-torsion element in $H_{k_i-1}(Y_{i})$.
Observe that 
$$f'(p\cdot (c_1, \dots, c_s), p\cdot y)= p\cdot a=0\in H_{k_i-1}(Y_{i}).$$
Hence, there exists $m\in \ZZ$ such that 
$$(\phi_i)_{\ast}(m) = m \cdot (\phi_i)_{\ast}(1) = m \cdot ((d_1, \dots, d_s), x)=(p \cdot (c_1, \dots, c_s), p \cdot y).$$
Since the degree 
$$\deg (\phi_i \colon S^{k_i-1}/G_i \to Y_{i-1}) = \gcd \{d_i \mid 1\leq i\leq s,~d_i\neq 0\}$$ 
of the attaching map is coprime to $p$ by the assumption,
we conclude that $p$ divides $m$, i.e., $m=p\cdot m'$ for some $m'\in \ZZ \setminus \{0\}$. 
So,  $y \equiv m' \cdot x$ (mod $p$) and $c_j=m' \cdot d_j$ for $j=1, \ldots, s$. Thus, we have  
\begin{align*}
a=f'((c_1, \dots, c_s), m'\cdot x)
=m' \cdot f'((d_1, \dots, d_s), x )
=m' \cdot (f'\circ (\phi_i)_{\ast})(1)=0
\end{align*}
which is a contradiction. Hence, we conclude that $H_{k_i-1}(Y_{i})$ has no 
$p$-torsion elements.
\end{enumerate}

\end{proof}

\begin{corollary}\label{no_torsion_coro}
If $X$ is a $\mathbf{q}$-CW complex satisfying the assumption of Theorem 
\ref{no_p-torsion_thm} for each prime $p$, then $H_{\ast}(X; \Z)$ has no torsion. 
\end{corollary}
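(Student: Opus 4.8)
The plan is to reduce torsion-freeness of $H_\ast(X;\Z)$ to a prime-by-prime statement and then to invoke Theorem \ref{no_p-torsion_thm} separately at each prime. First I would record that $X$ is a \emph{finite} $\mathbf{q}$-CW complex: the existence of a building sequence $\{(Y_i,0_i)\}_{i=1}^\ell$ presupposes only finitely many $\mathbf{q}$-cells, so each group $H_\ast(X;\Z)$ is finitely generated and its torsion subgroup $T$ is a finite abelian group.

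Next I would appeal to the structure theory of finite abelian groups. Such a group splits canonically as the direct sum of its $p$-primary components $T_{(p)}$ over all primes $p$, and $T_{(p)}$ is precisely the subgroup of $p$-torsion elements. Hence $T=0$ if and only if $H_\ast(X;\Z)$ contains no $p$-torsion for any prime $p$. This turns the global claim into the family of local claims that Theorem \ref{no_p-torsion_thm} is designed to deliver. For each fixed prime $p$, the hypothesis of the corollary furnishes a building sequence satisfying the gcd condition $\gcd\{p,|G_i|\}=1$ and the degree condition on the attaching maps; applying Theorem \ref{no_p-torsion_thm} then gives that $H_\ast(X;\Z)$ has no $p$-torsion. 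Running this over all primes and feeding the results into the decomposition above forces $T=0$, so $H_\ast(X;\Z)$ is torsion free.

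I do not expect a genuine obstacle here, since the argument is a routine assembly of the per-prime conclusions. The single point deserving a moment's care is that the building sequence realizing the hypotheses is permitted to depend on $p$, whereas the statement we extract at each prime, namely that $H_\ast(X;\Z)$ has no $p$-torsion, is an intrinsic property of $X$ that does not reference the chosen sequence. Because of this, the separate conclusions at distinct primes concern the same fixed groups $H_\ast(X;\Z)$ and may be combined without conflict; the varying sequences are merely a means of certifying each local vanishing.
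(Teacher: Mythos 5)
Your proposal is correct and matches the paper's (implicit) argument: the corollary is obtained exactly by applying Theorem \ref{no_p-torsion_thm} prime by prime and combining the per-prime conclusions, just as the paper does for Theorem \ref{thm_even_cells_no_torsion} via Theorem \ref{thm_even_cells_no_p-torsion}. Your remark that the building sequence may vary with $p$ while the conclusion ``no $p$-torsion'' is intrinsic to $X$ is precisely the one point worth making explicit.
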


\begin{corollary}
Let $\{(Y_i, 0_i)\}_{i=1}^\ell$ be a building sequence for a $\mathbf{q}$-CW
complex $X$ such that $H_{\ast}(Y_{i -1}; \ZZ)$ has $p$-torsion and
 $\gcd\{p, |G_j|\}=1$ with $e^{k_i}/G_i =Y_i \setminus Y_{i-1}$ for $j > i$.
 Then $H_{\ast}(Y_{i}; \ZZ)$ has $p$-torsion, and hence $H_{\ast}(X; \ZZ)$
 has $p$-torsion.
\end{corollary}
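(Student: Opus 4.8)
The plan is to isolate a single cell-attachment step and then iterate it. First I would reduce the statement to the following claim: if $Z$ is a $\mathbf{q}$-CW subcomplex whose integral homology has $p$-torsion, and $Z'$ is obtained from $Z$ by attaching one $\mathbf{q}$-cell $\bar{e}^{k}/G$ with $\gcd\{p,|G|\}=1$, then $H_\ast(Z';\Z)$ also has $p$-torsion. Granting this claim, the corollary follows by applying it successively to the pairs $(Y_{j-1},Y_j)$ for $j>i$: each such attaching group $G_j$ is coprime to $p$ by hypothesis, so the $p$-torsion already present in $H_\ast(Y_i;\Z)$ is carried forward through every stage up to $Y_\ell=X$.

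For the step I would invoke the cofibration $S^{k-1}/G\to Z\hookrightarrow Z'$ together with its long exact homology sequence, exactly as in \eqref{eq_les_near_top_homology}, and tensor it with $\Z_{(p)}$; exactness survives because $\Z_{(p)}$ is flat over $\Z$. By Corollary \ref{no_tor_in_lens}, the reduced $\Z_{(p)}$-homology of the orbifold lens space $S^{k-1}/G$ is concentrated in the single degree $k-1$, where it is either $\Z_{(p)}$ or $0$. Reading off the localized sequence then gives $H_j(Z';\Z)_{(p)}\cong H_j(Z;\Z)_{(p)}$ for every $j\notin\{k-1,k\}$; moreover in degree $k$ the only new contribution is a free $\Z_{(p)}$-summand (the kernel of the map $\alpha$ induced by the attaching map), so the torsion subgroup in degree $k$ is unchanged as well. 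Hence if the $p$-torsion of $Z$ is detected in any degree other than $k-1$, it survives into $Z'$ and the step is complete.

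The main obstacle is the residual case in which all of the $p$-torsion of $Z$ sits in degree $k-1$ while $\widetilde{H}_{k-1}(S^{k-1}/G)_{(p)}\cong\Z_{(p)}$, so that $H_{k-1}(Z';\Z)_{(p)}\cong\coker\alpha$ might be $p$-torsion-free. Here everything hinges on the class $\alpha(1)\in H_{k-1}(Z;\Z)_{(p)}$: its image is a cyclic $\Z_{(p)}$-module, and I would argue that when the degree of the attaching map $S^{k-1}/G\to Z$, in the sense of Definition \ref{def_degree}, is coprime to $p$, the free part of $\alpha(1)$ has a coordinate that is a unit at $p$, so $\im\alpha$ is a free rank-one summand meeting the torsion trivially; quotienting by it then leaves the torsion subgroup intact. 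The delicate point, which I expect to be the crux, is precisely the degenerate situation where this degree is $0$ — that is, $\alpha(1)$ is itself a torsion class that could generate the whole $p$-torsion of degree $k-1$; excluding this (or tracking the free-versus-torsion part of $\alpha(1)$ as in the proof of Theorem \ref{no_p-torsion_thm}) is what makes the argument go through, and the remainder is the same routine diagram chase used for Theorems \ref{thm_even_cells_no_p-torsion} and \ref{no_p-torsion_thm}.
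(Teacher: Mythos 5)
Your route is the same as the paper's --- the paper's entire proof is a one-line citation of the long exact sequence \eqref{eq_les_near_top_homology} --- but you carry it out more carefully, and in doing so you have put your finger on a real problem. Your analysis away from degree $k-1$ is correct: after localizing at $p$, the reduced homology of $S^{k-1}/G$ is concentrated in degree $k-1$, so $H_j(Z')_{(p)}\cong H_j(Z)_{(p)}$ for $j\notin\{k-1,k\}$, and $H_k(Z')_{(p)}\cong H_k(Z)_{(p)}\oplus\ker\alpha$ with $\ker\alpha$ free, so $p$-torsion in those degrees survives. But the residual case you flag in degree $k-1$ is genuinely unresolved in your write-up, and it cannot be resolved from the stated hypotheses: the corollary imposes no condition on the attaching maps, so nothing prevents $\alpha(1)=(\phi_j)_*(1)$ from being exactly a generator of the $p$-torsion of $H_{k_j-1}(Y_{j-1};\Z)$ (the degree-$0$ situation of Definition \ref{def_degree}), in which case $H_{k_j-1}(Y_j;\Z)_{(p)}=\coker\alpha$ loses that torsion and nothing reappears in degree $k_j$.

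Concretely, already with all $G_i$ trivial: build $Y_3=M(\Z/p,n)=S^n\cup_p e^{n+1}$ with $n\ge 2$, and attach a further $(n+1)$-cell along a map $S^n\to Y_3$ representing a generator of $\pi_n(Y_3)\cong H_n(Y_3)\cong\Z/p$. Then $H_\ast(Y_3;\Z)$ has $p$-torsion and every $|G_j|=1$ is coprime to $p$, yet $H_\ast(Y_4;\Z)$ is torsion free. So the gap you identify is a defect of the statement itself, not merely of your argument, and the paper's one-line proof passes over it. Your proposed repair via the degree of Definition \ref{def_degree} is the right idea, but note that you must require the degree to be \emph{nonzero} and coprime to $p$: degree $0$ is precisely the dangerous case here, in contrast to Theorem \ref{no_p-torsion_thm}, where degree $0$ is harmless because there $H_{k_i-1}(Y_{i-1};\Z)$ is assumed $p$-torsion free. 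Alternatively, the conclusion does hold whenever the $p$-torsion of $H_\ast(Y_{i-1};\Z)$ appears in some degree different from $k_j-1$ for every $j>i$ (for instance, when all later cells are even dimensional and the torsion sits in even degree); that weaker statement is what the first two paragraphs of your proposal actually prove.
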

\begin{proof}
This follows from \eqref{eq_les_near_top_homology}.
\end{proof}

\renewcommand{\refname}{References}

\bibliographystyle{alpha}

\end{document}